%*****************************************************************************
%
%   Title: On a set of 60 points in P3 with remarkable properties
%   Authors: Piotr Pokora, Tomasz Szemberg, Justyna Szpond
%
%   20.01.2022
%
%*****************************************************************************

\documentclass[11pt]{article}
\usepackage[a4paper]{anysize}\marginsize{2.0cm}{2.0cm}{1.0cm}{1.0cm}
\pdfpagewidth=\paperwidth \pdfpageheight=\paperheight
\usepackage{pgf,tikz,hyperref}
\usetikzlibrary{arrows}
\usepackage{amsfonts,amssymb,amsthm,amsmath,eucal}
\usepackage{graphicx}
\usepackage{ltablex}
\usepackage{caption, booktabs}

\pagestyle{myheadings}
\emergencystretch=3em

%*****************************************************************************
% Style

%*****************************************************************************
% Theorems

\theoremstyle{plain}
\newtheorem{thm}{Theorem}[section]
\newtheorem{theorem}[thm]{Theorem}

\newtheorem{lemma}[thm]{Lemma}
\newtheorem{proposition}[thm]{Proposition}
\newtheorem{corollary}[thm]{Corollary}

\theoremstyle{definition}
\newtheorem{definition}[thm]{Definition}
\newtheorem{remark}[thm]{Remark}

\newtheorem{question}[thm]{Question}
\newtheorem{problem}[thm]{Problem}

\newtheorem{thevarthm}[thm]{\varthmname}

\newenvironment{varthm*}[1]{\trivlist\item[]{\bf #1.}\it}{\endtrivlist}

%*****************************************************************************
% Macros

%\renewcommand\ge{\geqslant}
%\renewcommand\geq{\geqslant}
%\renewcommand\le{\leqslant}
%\renewcommand\leq{\leqslant}
%\newcommand\longto{\longrightarrow}
%\let\tilde=\widetilde
%\newcommand\numequiv{\equiv_{\rm num}}
%\newcommand\dsupseteq{{\displaystyle\supseteq}}
\newcommand\be{\begin{eqnarray*}}
\newcommand\ee{\end{eqnarray*}}

\newcommand\C{\mathbb C}

\newcommand\calo{\mathcal O}

\newcommand\calc{\mathcal C}

\newcommand\R{\mathbb R}

\renewcommand\L{\mathbb L}
\newcommand\jeden{{\mathbb I}}
\renewcommand\P{\mathbb P}

\newcommand\newop[2]{\def#1{\mathop{\rm #2}\nolimits}}
\newop\edim{edim}
\newop\Zeroes{Zeroes}
\newop\Jac{Jac}
\newop\Ass{Ass}
\newop\SL{SL}
\newop\PGL{{\P}GL}
\newop\Km{Km}
\newop\reg{reg}

\newcommand\geproci{\emph{geproci}}

\newcommand\keywords[1]{{\renewcommand\thefootnote{}\footnotetext{\textit{Keywords:} #1.}}}
\newcommand\subclass[1]{{\renewcommand\thefootnote{}\footnotetext{\textit{Mathematics Subject Classification (2010):} #1.}}}

\def\endproof{\hspace*{\fill}\endproofsymbol\endtrivlist}

\def\endproofsymbol{\frame{\rule[0pt]{0pt}{6pt}\rule[0pt]{6pt}{0pt}}}

%*****************************************************************************
%*****************************************************************************

\begin{document}

\author{Piotr Pokora, Tomasz Szemberg, and Justyna Szpond}
\title{Unexpected properties of the Klein configuration\\ of $60$ points in $\P^3$}
\date{\today}
\maketitle
\thispagestyle{empty}

\begin{abstract}
   Felix Klein in course of his study of the regular icosahedron and its symmetries encountered a highly symmetric configuration of $60$ points in $\P^3$. This configuration has appeared in various guises, perhaps post notably as the configuration of points dual to the $60$ reflection planes in the group $G_{31}$ in the Shephard-Todd list.

   In the present note we show that the $60$ points exhibit interesting properties relevant from the point of view of two paths of research initiated recently. Firstly, they give rise to two completely different unexpected surfaces of degree $6$. Unexpected hypersurfaces have been introduced by Cook II, Harbourne, Migliore, Nagel in 2018. One of unexpected surfaces associated to the configuration of $60$ points is a cone with a single singularity of multiplicity $6$ and the other has three singular points of multiplicities $4,2$ and $2$. Secondly, Chiantini and Migliore observed in 2020 that there are non-trivial sets of points in $\P^3$ with the surprising property that their general projection to $\P^2$ is a complete intersection. They found a family of such sets, which they called grids. An appendix to their paper describes an exotic configuration of $24$ points in $\P^3$ which is not a grid but has the remarkable property that its general projection is a complete intersection. We show that the Klein configuration is also not a grid and it projects to a complete intersections. We identify also its proper subsets, which enjoy the same property.
\end{abstract}

\keywords{base loci, cones, complete intersections, finite Heisenberg group, Klein configuration, projections, reflection group, special linear systems, unexpected hypersurfaces}
\subclass{MSC 14C20 \and MSC 14N20 \and MSC 13A15}

%*****************************************************************************

\section{Introduction}
   Arrangements of hyperplanes defined by finite complex reflection groups and sets of points corresponding to the duals of the hyperplanes are a rich source of examples and a testing ground for various conjectures in commutative algebra and algebraic geometry. It is not surprising that they find their place in new paths of research developed within these two branches of mathematics.

   In the present note we study the configuration $Z_{60}$ of $60$ points in $\P^3$ determined by the reflection group $G_{31}$ in the Shephard Todd list \cite{SheTod54}. This configuration has been known for long, its origins go back to Felix Klein's doctoral thesis \cite{KleinPhD}. We recall basic properties, relevant for our study in Section \ref{sec: history}. In section \ref{sec: group} we discuss another group acting on the configuration, discovered only recently by Cheltsov and Shramov \cite{CheltsovShramov19}. In fact, their article was a departure point for this work.

   Our main results are presented in the two subsequent sections. The first results concern unexpected surfaces admitted by the set $Z_{60}$. The concept of \emph{unexpected hypersurfaces} has been introduced first for curves in the ground breaking article \cite{CHMN} by Cook II, Harbourne, Migliore and Nagel
   and generalized to arbitrary dimensions in the subsequent
   article \cite{HMNT} by Harbourne, Migliore, Nagel and Teitler. Loosely speaking, a finite set of points $Z$
   in a projective space admits an unexpected hypersurface of degree $d$ if a general fat point
   or a finite number of general fat points impose less conditions on forms
   of degree $d$ vanishing at $Z$ than naively expected, see Definition \ref{def:unexpected hypersurface}
   for precise statement. In Theorems \ref{thm: unexpected cone of degree 6} and \ref{thm: unexpected with 3 singularities} we show that $Z_{60}$ admits two different types of unexpected surfaces. One is a cone with a single singular point of multiplicity $6$ and the other is a surface of degree $6$ with three singular points of multiplicities $4, 2$ and $2$. It seems to be the first case where a fixed set of points admits unexpected hypersurfaces of two different kinds: one with a single general point in the spirit of foundational article \cite{CHMN} and the other with multiple general points in the spirit of \cite{Szp18multi}. This is a new and quite unexpected phenomenon.

   Theorem \ref{thm: geproci for Z60} goes in different direction. Chiantini and Migliore realized that there are sets of points spanning the whole $\P^3$ with the striking property that their general projections to a plane are complete intersections. We say that the set has the \geproci \, property, see Definition \ref{def geproci}. In their recent work \cite{ChiantiniMigliore19} they construct a series of examples of such sets, which they call grids, as they result as intersection points of two families of lines in $\P^3$ organized so that lines in both families are disjoint but each line from one family intersects all lines from the other family. They showed that for a small number of points, grids are the only sets with the \geproci \, property. On the other hand, in the appendix to their work there is an example of a set of $24$ points which has the \geproci \, property but is not a grid. We show that the set $Z_{60}$ behaves in the same way. It is not a grid but it has the \geproci \, property. More precisely its projection
   from a general point in $\P^3$ to $\P^2$ is a complete intersection of curves of degree
   $6$ and $10$. This provides in particular a positive answer to Question 7.1 in \cite{ChiantiniMigliore19}.

   We work over the field of complex numbers. We show in Proposition \ref{prop:notR} that the configuration studied here does not exist over the reals.
%*****************************************************************************
\section{Klein's arrangement $(60_{15})$ -- a historical outline}\label{sec: history}
We will follow a classical construction by F. Klein with a modern glimpse. Let ${\rm Gr}(2,4) = Q$ be the Grassmannian of lines in $\mathbb{P}^{3}$ embedded via the Pl\"ucker embedding as a smooth quadric hypersurface in $\mathbb{P}^{5}$. Let $L = H\cap Q$ with $H$ being a hyperplane in $\mathbb{P}^{5}$, then $L$ is called as a \emph{linear line complex}. This object was studied by F. Klein in his PhD thesis, see for instance \cite{KleinPhD}. In particular, using Klein's language, there are six fundamental linear complexes corresponding to the choice of the coordinate planes $H_i=\left\{x_{i}=0\right\}$ for homogeneous coordinated $x_{0},\ldots, x_{5}$ on $\mathbb{P}^{5}$. The intersections
$$Q \cap H_{i_{1}} \cap H_{i_{2}} \cap H_{i_{3}} \cap H_{i_{4}}$$
with $0 \leq i_{1} < i_{2} < i_{3} < i_{4} \leq 5$ consist of exactly two points each. These lines corresponding to the points form a configuration of
$$2 \cdot \binom{6}{4} = 30 \text{ lines in } \mathbb{P}^{3}.$$
We denote their union by $\mathbb{L}_{30}$. These lines intersect by $3$ in $60$ points. Klein showed that these points can be divided into $15$ subsets of $4$ points. Each subset determines vertices of a fundamental tetrahedron. The edges of these tetrahedra are contained in the $30$ lines. The faces and the vertices are all mutually distinct, this gives a $(60_{15})$ configuration of $60$ planes and $60$ vertices: through each vertex there pass exactly $15$ planes, and each plane contains exactly $15$ vertices. Moreover, through each of the $30$ edges $6$ planes pass, and each edge contains $6$ vertices. It is well-known by a result due to Shephard and Todd \cite{SheTod54} that this arrangement is defined by the unitary reflection group, denoted there by $G_{11,520}$. In the Orlik-Solomon notation the symbol $\mathcal{A}_{2}^{3}(60)$ is used for Klein's $(60_{15})$-arrangement. Following Hunt's notation for arrangements of planes in $\mathbb{P}^{3}$ \cite{Hunt86}, we denote by $t_{i}$ the number of points where exactly $i\geq 3$ of planes from the arrangement intersect, and by $t_{j}(1)$ the number of lines contained in exactly $j\geq 2$ planes in the arrangement. For $\mathcal{A}_{2}^{3}(60)$ we have:

$$t_{15} = 60, \quad t_{6} = 480, \quad t_{4} = 960,$$
$$t_{6}(1) = 30, \quad t_{3}(1) = 320, \quad t_{2}(1) = 360.$$
It is worth noticing that the set $Z_{60}$ and $\mathbb{L}_{30}$ build a $(60_{3}, 30_{6})$ configuration of points and lines.

To the arrangement $\mathcal{A}_{2}^{3}(60)$ we can associate an interesting arrangement of $10$ quadrics -- these are called in literature \textit{Klein's fundamental quadrics}. It is worth emphasizing that through each of the $30$ lines described above there are $4$ fundamental quadrics containing that line. We refer to \cite{CheltsovShramov19} for the table of incidences between the $30$ lines and $10$ Klein's fundamental quadrics.
\section{A finite Heisenberg group and a group of order $80$}
\label{sec: group}
   Let $H_{2,2}$ be the subgroup of $\SL_4(\C)$ generated by the following four matrices:
   $$
   S_1=\left(\begin{array}{cccc}
            0 & 0 & 1 & 0\\
            0 & 0 & 0 & 1\\
            1 & 0 & 0 & 0\\
            0 & 1 & 0 & 0
            \end{array}\right),\;
   S_2=\left(\begin{array}{cccc}
            0 & 1 & 0 & 0\\
            1 & 0 & 0 & 0\\
            0 & 0 & 0 & 1\\
            0 & 0 & 1 & 0
            \end{array}\right),$$
   $$
   T_1=\left(\begin{array}{cccc}
            1 & 0 & 0 & 0\\
            0 & 1 & 0 & 0\\
            0 & 0 & -1 & 0\\
            0 & 0 & 0 & -1
            \end{array}\right),\;
   T_2=\left(\begin{array}{cccc}
            1 & 0 & 0 & 0\\
            0 & -1 & 0 & 0\\
            0 & 0 & 1 & 0\\
            0 & 0 & 0 & -1
            \end{array}\right).$$
   This group has order $32$ (note that all pairs of generators commute except of $S_iT_i=-T_iS_i$ for $i=1,2$.) and the center and the commutator of $H_{2,2}$ are both equal to
   $\left\{\jeden, -\jeden\right\}$, where $\jeden$ denotes the identity matrix of size $4$. We call $H_{2,2}$ the (finite) \emph{Heisenberg group}.

   Consider the natural projection $\phi: \SL_{4}(\mathbb{C}) \rightarrow {\rm PGL}_{4}(\mathbb{C})$ and for every group $G \subset \SL_{4}(\mathbb{C})$ we define $\overline{G} = \phi(G)$. Denote by $G_{80}$ the subgroup in $\SL_{4}(\mathbb{C})$ generated by $H_{2,2}$ and the following matrix
   $$ T=\frac{1+i}{2}\left(\begin{array}{cccc}
           -i & 0 & 0 & i\\
            0 & 1 & 1 & 0 \\
            1 & 0 & 0 & 1 \\
            0 & -i & i & 0
            \end{array}\right),$$
    and by $\overline{G_{80}}$ the image of $G_{80}$ in ${\rm PGL}_{4}(\mathbb{C})$.

    With the notation fixed, we are now in the position to endow the configurations described in Section \ref{sec: history}
    with coordinates. Checking all properties of Klein's arrangement listed in Section \ref{sec: history} boils thus to elementary linear algebra. First, one can show that Klein's fundamental quadrics are invariant under the action of the group $\overline{G_{80}}$, and this group splits them into two orbits (note that the quadrics are $H_{2,2}$ invariant):
\begin{align*}
\mathcal{Q}_{1} &= x^2 + y^2 + z^2 + w^2,\\
\mathcal{Q}_{2} =T(\mathcal{Q}_{1}) &= xw + zy,\\
\mathcal{Q}_{3} =T^2(\mathcal{Q}_{1}) &= xz + yw,\\
\mathcal{Q}_{4} =T^3(\mathcal{Q}_{1}) &= x^2 + y^2 - z^2 - w^2,\\
\mathcal{Q}_{5} =T^4(\mathcal{Q}_{1})&= x^2 - y^2 - z^2 + w^2,\\
\mathcal{Q}_{6} &= x^2 - y^2 + z^2 - w^2, \\
\mathcal{Q}_{7} =T(\mathcal{Q}_{6}) &= xw - yz, \\
\mathcal{Q}_{8} =T^2(\mathcal{Q}_{6}) &= xy + zw,\\
\mathcal{Q}_{9} =T^3(\mathcal{Q}_{1}) &= xy -zw,\\
\mathcal{Q}_{10} =T^4(\mathcal{Q}_{1}) &= xz - yw.
\end{align*}
Using equations of quadrics and taking into account that each pair of them intersects in $4$ of $30$ Klein's lines, we identify the equations of lines
\[\begin{array}{ll}
 \ell_{1} = V(x,y), & \ell_{2} = V(z-w, x-y), \\

 \ell_{3} = V(z-w,x+y), &\ell_{4} = V(z +i\cdot w, x +i\cdot y), \\

 \ell_{5} = V(z+i \cdot w, x-i\cdot y), & \ell_{6} = V(z+w,x-y), \\

 \ell_{7} = V(z+w, x+y), & \ell_{8} = V(z-i \cdot w, x+ i\cdot y), \\

 \ell_{9} = V(z - i\cdot w, x -i\cdot y), & \ell_{10} = V(z,x), \\

 \ell_{11} = V(y-w,x-z), & \ell_{12} = V(y-w,x+z), \\

 \ell_{13} = V(y + i\cdot w, x+ i\cdot z), & \ell_{14} = V(y + i \cdot w, x - i\cdot z), \\

 \ell_{15} = V(y+w, x-z), & \ell_{16} = V(y+w, x+z), \\

 \ell_{17} = V(y-i \cdot w, x + i \cdot z), & \ell_{18} = V(y - i \cdot w, x - i \cdot z), \\

 \ell_{19} = V(w,x), & \ell_{20} = V(y-z, x-w), \\

 \ell_{21} = V(y-z, x+w), & \ell_{22} = V(y + i\cdot z, x + i \cdot w), \\

 \ell_{23} = V(y + i \cdot z, x-i \cdot w), & \ell_{24} = V (y+z, x-w),\\

 \ell_{25} = V(y+z,x+w), & \ell_{26} = V(y - i \cdot z, x+ i \cdot w), \\

 \ell_{27} = V(y-i \cdot z, x - i\cdot w), & \ell_{28} = V(z,y), \\

 \ell_{29} = V(w,y), & \ell_{30} = V(w,z).

\end{array}\]
Finally, taking intersection points of lines, we identify coordinates of points in the $Z_{60}$ set
\iffalse
\begin{footnotesize}
\[\begin{array}{llll}
P_{1} = [0:0:1:1] & P_{2} = [0:0:1:i] & P_{3} = [0:0:1:-1] & P_{4} = [0:0:1:-i] \\

P_{5} = [0:1:0:1] & P_{6} = [0:1:0:i] & P_{7} = [0:1:0:-1] & P_{8} = [0:1:0:-i] \\

P_{9} = [0:1:1:0] & P_{10} = [0:1:i:0] & P_{11} = [0:1:-1:0] & P_{12} = [0:1:-i:0] \\

P_{13} = [1:0:0:1] & P_{14} = [1:0:0:i] & P_{15} = [1:0:0:-1] & P_{16} = [1:0:0:-i] \\

P_{17} = [1:0:1:0] & P_{18} = [1:0:i:0] & P_{19} = [1:0:-1:0] & P_{20} = [1:0:-i:0] \\

P_{21} = [1:1:0:0] & P_{22} = [1:i:0:0] & P_{23} = [1:-1:0:0] & P_{24} = [1:-i:0:0] \\

P_{25} = [1:0:0:0] & P_{26} = [0:1:0:0] & P_{27} = [0:0:1:0]  & P_{28} =[0:0:0:1] \\

P_{29} = [1:1:1:1] & P_{30} = [1:1:1:-1] & P_{31} = [1:1:-1:1] & P_{32} = [1:1:-1:-1] \\

P_{33} = [1:-1:1:1] & P_{34} = [1:-1:1:-1] & P_{35} = [1:-1:-1:1] & P_{36} = [1:-1:-1:-1] \\

P_{37} = [1:1:i:i] & P_{38} = [1:1:i:-i] & P_{39} = [1:1:-i:i] & P_{40} = [1:1:-i:-i] \\

P_{41} = [1:-1:i:i] & P_{42} = [1:-1:i:-i] & P_{43} = [1:-1:-i:1] & P_{44} = [1:-1:-i:-i] \\

P_{45} = [1:i:1:i] & P_{46} = [1:i:1:-i] & P_{47} = [1:-i:1:i] & P_{48} = [1:-i:1:-i] \\

P_{49} = [1:i:-1:i] & P_{50} = [1:i:-1:-i] & P_{51} = [1:-i:-1:i] & P_{52} = [1:-i:-1:-i] \\

P_{53} = [1:i:i:1] & P_{54} = [1:i:-i:1] & P_{55} = [1:-i:i:1] & P_{56} = [1:-i:-i:1] \\

P_{57} = [1:i:i:-1] & P_{58} = [1:i:-i:-1] & P_{59} = [1:-i:i:-1] & P_{60} = [1:-i:-i:-1]
\end{array}.\]
\end{footnotesize}
\fi
\[\begin{array}{lll}
P_{1} = [0:0:1:1] & P_{2} = [0:0:1:i] & P_{3} = [0:0:1:-1] \\

P_{4} = [0:0:1:-i] & P_{5} = [0:1:0:1] & P_{6} = [0:1:0:i] \\

P_{7} = [0:1:0:-1] & P_{8} = [0:1:0:-i] & P_{9} = [0:1:1:0] \\

P_{10} = [0:1:i:0] & P_{11} = [0:1:-1:0] & P_{12} = [0:1:-i:0] \\

P_{13} = [1:0:0:1] & P_{14} = [1:0:0:i] & P_{15} = [1:0:0:-1] \\

P_{16} = [1:0:0:-i] & P_{17} = [1:0:1:0] & P_{18} = [1:0:i:0]\\

P_{19} = [1:0:-1:0] & P_{20} = [1:0:-i:0] & P_{21} = [1:1:0:0]\\

P_{22} = [1:i:0:0] & P_{23} = [1:-1:0:0] & P_{24} = [1:-i:0:0] \\

P_{25} = [1:0:0:0] & P_{26} = [0:1:0:0] & P_{27} = [0:0:1:0]\\

P_{28} =[0:0:0:1] & P_{29} = [1:1:1:1] & P_{30} = [1:1:1:-1]\\

P_{31} = [1:1:-1:1] & P_{32} = [1:1:-1:-1] & P_{33} = [1:-1:1:1]\\

P_{34} = [1:-1:1:-1] & P_{35} = [1:-1:-1:1] & P_{36} = [1:-1:-1:-1] \\

P_{37} = [1:1:i:i] & P_{38} = [1:1:i:-i] & P_{39} = [1:1:-i:i]\\

P_{40} = [1:1:-i:-i] & P_{41} = [1:-1:i:i] & P_{42} = [1:-1:i:-i]\\

P_{43} = [1:-1:-i:i] & P_{44} = [1:-1:-i:-i] & P_{45} = [1:i:1:i]\\

P_{46} = [1:i:1:-i] & P_{47} = [1:-i:1:i] & P_{48} = [1:-i:1:-i] \\

P_{49} = [1:i:-1:i] & P_{50} = [1:i:-1:-i] & P_{51} = [1:-i:-1:i]\\

P_{52} = [1:-i:-1:-i] & P_{53} = [1:i:i:1] & P_{54} = [1:i:-i:1]\\

P_{55} = [1:-i:i:1] & P_{56} = [1:-i:-i:1] & P_{57} = [1:i:i:-1]\\

P_{58} = [1:i:-i:-1] & P_{59} = [1:-i:i:-1] & P_{60} = [1:-i:-i:-1].
\end{array}\]
The following lemma proved in \cite[Lemma 3.18]{CheltsovShramov19} gives useful geometric information about $Z_{60}$ and $\L_{30}$.
\begin{lemma}[Line-point incidences]
\label{lemma:Heseinebrg-lines-points}
The set $Z_{60} \subset \mathbb{P}^{3}$ contains all the intersection points of lines in $\L_{30}$. Moreover, for every point $P\in Z_{60}$ there are exactly three lines from $\L_{30}$ passing through $P$.
\end{lemma}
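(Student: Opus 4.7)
The plan is to verify both assertions by direct linear-algebra computation using the explicit coordinates for the lines in $\L_{30}$ and the points in $Z_{60}$ tabulated above, exploiting the action of the group $\overline{G_{80}}$ to cut the number of cases to a manageable size.

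First I would verify that both $\L_{30}$ and $Z_{60}$ are preserved by $\overline{G_{80}}$. For $\L_{30}$ this follows from the fact that the group permutes the Klein quadrics $\mathcal{Q}_1,\ldots,\mathcal{Q}_{10}$ and hence their pairwise intersections. For $Z_{60}$ it is a direct substitution, namely applying each generator of $\overline{G_{80}}$ to each $P_i$ and checking that the image lies in the list. This equivariance reduces every subsequent check to orbit representatives. Inspection suggests a natural stratification: the six "coordinate" lines $\ell_1, \ell_{10}, \ell_{19}, \ell_{28}, \ell_{29}, \ell_{30}$ behave differently from the remaining $24$ lines, which involve sums and differences, while the $60$ points fall into a small number of orbits with natural representatives such as $P_{27} = [0{:}0{:}1{:}0]$, $P_1 = [0{:}0{:}1{:}1]$, and $P_{29} = [1{:}1{:}1{:}1]$.

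For the first assertion, for each pair $(\ell_i, \ell_j)$ of orbit representatives I would solve the four linear equations cutting out $\ell_i \cap \ell_j$; either the system is inconsistent, so the two lines are skew, or its solution is a single projective point that I match against the list $P_1, \ldots, P_{60}$. Transporting by $\overline{G_{80}}$ then delivers the same conclusion for every pair in $\L_{30}$, so no intersection point of two lines from $\L_{30}$ escapes $Z_{60}$.

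For the second assertion I fix an orbit representative $P$ and, for each line $\ell_k$, substitute the coordinates of $P$ into the two defining forms of $\ell_k$ and count how often both vanish. At $P_{27} = [0{:}0{:}1{:}0]$, for example, exactly the forms cutting out $\ell_1 = V(x,y)$, $\ell_{19} = V(w,x)$, and $\ell_{29} = V(w,y)$ vanish—three lines, as required—while at $P_{29} = [1{:}1{:}1{:}1]$ a short inspection isolates $\ell_2$, $\ell_{11}$, $\ell_{20}$, and a parallel calculation handles each remaining orbit representative. The main obstacle is organizational rather than mathematical: one must enumerate the orbit representatives carefully and track the arithmetic with $\pm 1, \pm i$, but no individual case is deeper than substituting four scalars into four linear forms.
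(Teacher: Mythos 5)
Your plan is a legitimate route to the statement, but it is not the paper's route: the paper offers no proof at all, attributing the lemma to \cite[Lemma 3.18]{CheltsovShramov19} (while remarking, in the same spirit as your proposal, that once coordinates are fixed all such properties reduce to elementary linear algebra). So any complete argument here is ``different from the paper'' by default. Your sample computations are correct: at $P_{27}=[0{:}0{:}1{:}0]$ exactly $\ell_1,\ell_{19},\ell_{29}$ vanish, and at $P_{29}=[1{:}1{:}1{:}1]$ exactly $\ell_2,\ell_{11},\ell_{20}$ do. A useful cross-check you could add is the double count $30\cdot 6=60\cdot 3$: once you know each line of $\L_{30}$ carries exactly $6$ points of $Z_{60}$ and each point lies on at least $3$ lines, the count forces ``exactly $3$'' everywhere.

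The one genuine flaw is in how you deploy equivariance for the first assertion. Checking the pairs $(\ell_i,\ell_j)$ where \emph{both} entries run over orbit representatives of the action on \emph{lines}, and then ``transporting by $\overline{G_{80}}$,'' does not cover all $\binom{30}{2}$ pairs: the group orbit of a pair of representatives is only one orbit in the diagonal action on pairs, and most pairs lie in other orbits. The correct reduction is to fix one representative $\ell_i$ per line-orbit and pair it against \emph{all} $\ell_j$, cutting down further only by the stabilizer of $\ell_i$; alternatively, since there are only $435$ pairs (or, for the second assertion, $1800$ point--line incidences), the unreduced brute-force check is entirely feasible and removes the issue. A second, smaller caution: you should verify rather than assume that the six coordinate lines $\ell_1,\ell_{10},\ell_{19},\ell_{28},\ell_{29},\ell_{30}$ form a union of $\overline{G_{80}}$-orbits --- the generator $T$ visibly mixes coordinate and non-coordinate data (e.g.\ $T(\mathcal{Q}_1)=\mathcal{Q}_2$), so the orbit structure on $\L_{30}$ needs to be computed, not guessed. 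Neither repair requires a new idea, but as written the case analysis would not be exhaustive.
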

Now we turn to algebraic properties of the ideal of points in $Z_{60}$.
\begin{lemma}[Generators of $I(Z_{60})$]\label{lem: 24 generators of J}
The ideal $J=I(Z_{60})$ of points in $Z_{60}$ is generated by $24$ forms of degree $6$.
\end{lemma}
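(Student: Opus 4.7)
The plan is to establish three claims in turn: $J$ contains no nonzero forms of degree less than $6$; $\dim J_6 = 24$; and these $24$ forms generate $J$ in every higher degree.

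The first claim rests on the line structure. Each line $\ell \subset \L_{30}$ meets $Z_{60}$ in exactly $6$ points (by the incidence data summarized before Lemma \ref{lemma:Heseinebrg-lines-points}), so for $F \in J_d$ with $d < 6$ the restriction $F|_\ell$ is a form of degree $d<6$ on $\ell \cong \P^1$ vanishing at $6$ points and therefore vanishes identically. Hence $F$ vanishes on all of $\L_{30}$, and it suffices to show $I(\L_{30})_d = 0$ for $d \le 5$. The case $d=1$ is immediate since $Z_{60}$ contains the coordinate points $P_{25},\ldots,P_{28}$, which span $\P^3$. The case $d=2$ I would handle combinatorially: the $60$ points account for $60\cdot\binom{3}{2}=180$ incident line-pairs in total (every intersection of lines lies in $Z_{60}$ by Lemma \ref{lemma:Heseinebrg-lines-points}), whereas a smooth quadric containing $k+(30-k)$ lines in its two rulings yields $k(30-k)$ meeting pairs and $k(30-k)=180$ has no integer solution, a quadric cone would force all $30$ lines through its vertex (contradicting the three-lines-per-point count), and any reducible or double quadric already forces at least $2\binom{15}{2}=210>180$ intra-planar meetings. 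For $d=3,4,5$ I would rely on a direct symbolic computation in the coordinates listed above.

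For the second claim, $\dim R_6=\binom{9}{3}=84$, so vanishing at the $60$ points gives $\dim J_6\ge 24$, with equality exactly when the $60$ evaluation functionals at $P_1,\ldots,P_{60}$ are linearly independent on $R_6$. This reduces to verifying that the explicit $60\times 84$ evaluation matrix has full rank $60$, which is a routine calculation in the listed coordinates.

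For the third claim, I would verify that the multiplication map $R_{d-6}\otimes J_6 \to J_d$ is surjective for all $d\ge 7$, equivalently that the minimal free resolution of $R/J$ has no generators in degrees greater than $6$. This reduces to a finite check (most conveniently in Macaulay2 using the coordinates above) and is the most mechanical but longest step. The main conceptual obstacle is the first claim: once the low-degree vanishing has been pinned down, the remaining steps are a rank computation and a confirmation that no higher-degree generators appear.
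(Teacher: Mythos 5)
Your proposal is logically sound and would prove the lemma, but it takes a genuinely different and considerably more computational route than the paper. For the vanishing in degrees at most $5$ you restrict to the lines of $\L_{30}$ (each carrying $6$ points of $Z_{60}$, so every form of degree less than $6$ in $J$ vanishes on all of $\L_{30}$) and then attack $I(\L_{30})_d=0$ degree by degree; your ruling-count argument in degree $2$ is a nice elementary touch, but note that only the degree-$5$ case is actually needed (a nonzero form of lower degree in $I(\L_{30})$ yields one of degree $5$ after multiplication by a form of complementary degree), and it is precisely degrees $3$, $4$, $5$ that you leave entirely to the computer. The paper instead removes the four coordinate points to form $W=Z_{60}\setminus\{P_{25},\ldots,P_{28}\}$, with $|W|=56=\binom{8}{3}$, and excludes quintics through $W$ by a short B\'ezout argument: such a quintic would contain the $24$ lines of $\L_{30}$ avoiding the coordinate points, and each of the four fundamental quadrics $\mathcal{Q}_1,\mathcal{Q}_4,\mathcal{Q}_5,\mathcal{Q}_6$ contains $12$ of these lines while meeting a quintic in a curve of degree only $10$, forcing all four quadrics to divide a degree-$5$ form, which is absurd. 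The payoff of this choice of $W$ is that ``no quintic through $W$'' is exactly ``independent conditions in degree $5$,'' whence $\reg(I(W))=6$, and both the count $h^0(\mathcal{I}_W(6))=28$ and the generation of $I(W)$ in degree $6$ follow from regularity rather than from your separate $60\times 84$ rank computation and resolution check; the $24$ generators of $J$ then emerge by imposing the four extra points, which kill the additional generators $g_1,\ldots,g_4$. So your approach buys little conceptually and shifts most of the burden onto symbolic computation, though it is rigorous once those finite checks are carried out -- and, to be fair, your explicit verification that the minimal free resolution of $R/J$ has no generators above degree $6$ addresses the surjectivity of $R_{d-6}\otimes J_6\to J_d$ more directly than the paper, which leaves the passage from generation of $I(W)$ to generation of $J$ largely implicit.
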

\begin{proof}
We want to show that the following, particularly nice forms, generate $J$, namely
\begin{equation}\label{eq: 24 generators}
\begin{array}{llll}
xy(x^4-y^4) & xz(z^4-x^4) & xw(x^4-w^4) & yz(y^4-z^4)\\
yw(w^4-y^4) & zw(z^4-w^4) & xy(z^4-w^4) & xz(y^4-w^4)\\
xw(y^4-z^4) & yz(x^4-w^4) & yw(x^4-z^4) & zw(x^4-y^4)\\
yw(x^2y^2-z^2w^2) & xw(x^2y^2-z^2w^2) & yz(x^2y^2-z^2w^2) & xz(x^2y^2-z^2w^2)\\
zw(x^2z^2-y^2w^2) & xw(x^2z^2-y^2w^2) & yz(x^2z^2-y^2w^2) & xy(x^2z^2-y^2w^2) \\
zw(y^2z^2-x^2w^2) & yw(y^2z^2-x^2w^2) & xz(y^2z^2-x^2w^2) & xy(y^2z^2-x^2w^2).
\end{array}
\end{equation}
To this end, we study first the diminished set $W=Z_{60}\setminus\{P_{25},P_{26},P_{27},P_{28}\}$,
i.e., the set $Z_{60}$ without the $4$ coordinate points in $\P^3$.

We claim that there is no form of degree $5$ vanishing along $W$. Note that the coordinate points in $\P^3$ lie in pairs on lines $\ell_1$, $\ell_{10}$, $\ell_{19}$, $\ell_{28}$, $\ell_{29}$ and $\ell_{30}$  from the set $\mathbb{L}_{30}$. In particular, the remaining $24$ lines $\mathbb{L}_{24}$ contain each still $6$ points from the set $W$. Note that the coordinate points in $\P^3$ are not contained in quadrics $Q_1$, $Q_4$, $Q_5$, $Q_6$. Hence each quadric contains $12$ lines from $\mathbb{L}_{30}$. Suppose now that there is a surface $\Omega$ of degree $5$ vanishing along $W$. Then, by B\'ezout Theorem, $\Omega$ contains all lines in $\mathbb{L}_{24}$. Taking the intersection of $\Omega$ with irreducible quadric $Q_1$, we identify $12$ lines contained in $Q_1$ as lying in $\Omega$. Again, by B\'ezout Theorem, this is possible only if $Q_1$ is a component of $\Omega$. By the same token, quadrics $Q_4$, $Q_5$ and $Q_6$ are also components of $\Omega$. As this is clearly not possible, we conclude that $\Omega$ does not exist.

Hence the points in $W$ impose independent conditions on forms of degree $5$ in $\P^3$, i.e., we have
\[
H^1(\P^3;\mathcal{O}_{\P^3}(5)\otimes I(W))=0.
\]
By \cite[Theorem 1.8.3]{PAG}, this gives
\[
{\rm reg} (I(W))=6.
\]
It follows that $W$ imposes independent conditions on forms of degree $6$ as well, hence
\[
h^0(\P^3,\mathcal{O}_{\P^3}(6)\otimes I(W))=\binom{9}{3}-56=28.
\]
In addition to generators listed in \eqref{eq: 24 generators} we have the following $4$ generators:
\begin{align*}
g_1 & =2x^2y^2z^2-x^4w^2-y^4w^2-z^4w^2+w^6,\\
g_2 & =2x^2y^2w^2-x^4z^2-y^4z^2-w^4z^2+z^6,\\
g_3 & =2x^2z^2w^2-x^4y^2-z^4y^2-w^4y^2+y^6,\\
g_4 & =2y^2z^2w^2-y^4x^2-z^4x^2-w^4x^2+x^6.
\end{align*}
Now, it is easy to see that requiring vanishing at the $4$ coordinate points kills the above additional generators.
\end{proof}
%\begin{center}
%\end{center}
%*****************************************************************************

\section{Unexpected hypersurfaces associated with $Z_{60}$}
   In the ground-breaking work \cite{CHMN} by Cook II, Harbourne, Migliore and Nagel introduced the concept of unexpected curves. This notion was generalized to arbitrary hypersurfaces in the subsequent article \cite{HMNT} by Harbourne, Migliore, Nagel and Teitler.
\begin{definition}\label{def:unexpected hypersurface}
   We say that a reduced set of points $Z\subset\P^N$ \emph{admits an unexpected hypersurface} of degree $d$
   if there exists a sequence of non-negative integers $m_1,\ldots,m_s$ such that for general points $P_1,\ldots,P_s$
   the zero-dimensional subscheme $P = m_1P_1+\ldots +m_sP_s$ fails to impose independent conditions on forms
   of degree $d$ vanishing along $Z$ and the set of such forms is non-empty. In other words, we have
   $$h^0(\P^N;\calo_{\P^N}(d)\otimes I(Z)\otimes I(P))>
     \max\left\{0, h^0(\P^N;\calo_{\P^N}(d)\otimes I(Z))-\sum_{i=1}^s\binom{N+m_s-1}{N}\right\}.$$
\end{definition}
   Following \cite[Definition 2.5]{ChiantiniMigliore19} we introduce also the following notion.
\begin{definition}[Unexpected cone property]
   Let $Z$ be a finite set of points in $\P^N$ and let $d$ be a positive integer. We say that $Z$ has the \emph{unexpected cone property} $\calc(d)$, if for a general point $P\in\P^3$, there exists an unexpected (in the sense of Definition \ref{def:unexpected hypersurface}) hypersurface $S_P$ of degree $d$ and multiplicity $d$ at $P$ passing through all points in $Z$.
\end{definition}

\begin{theorem}[Unexpected cone property of $Z_{60}$]\label{thm: unexpected cone of degree 6}
   The set $Z_{60}$ has the $\calc(6)$ property. Moreover, the unexpected cone of degree $6$ is unique.
\end{theorem}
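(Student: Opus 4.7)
A form of degree $6$ on $\P^{3}$ vanishing to order $6$ at a point $P$ is precisely a cone with apex $P$, namely the pullback $\pi_{P}^{\ast}C$ of a plane sextic $C\subset\P^{2}$ under the linear projection $\pi_{P}\colon\P^{3}\dashrightarrow\P^{2}$ away from $P$. Such a cone vanishes on $Z_{60}$ if and only if $C$ passes through the sixty image points $\pi_{P}(Z_{60})$. The theorem thus reduces to showing that for general $P\in\P^{3}$ the image $\pi_{P}(Z_{60})$ lies on a unique plane sextic.

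Uniqueness I would handle by a Bezout/pencil argument. Assume $\dim I(\pi_{P}(Z_{60}))_{6}\ge 2$ and pick a pencil $\Lambda$ of sextics through the sixty points. If $\Lambda$ has no common component, then by Bezout its base locus contains at most $6^{2}=36$ points, contradicting $60>36$. Otherwise $\Lambda=C_{0}\cdot\Lambda'$ with a fixed curve $C_{0}$ of degree $e\in\{1,\dots,5\}$ and a residual pencil $\Lambda'$ of degree $6-e$ curves having at most $(6-e)^{2}$ base points, so $C_{0}$ must contain at least $60-(6-e)^{2}$ of the projected points. By Lemma \ref{lemma:Heseinebrg-lines-points} the sixty projected points come equipped with a rich incidence structure: each of the thirty projected lines $\overline{\ell}_{i}:=\pi_{P}(\ell_{i})$ carries six of them, and each projected point lies on exactly three of these lines. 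Bezout gives $|C_{0}\cap\overline{\ell}_{i}|\le e$ whenever $\overline{\ell}_{i}\not\subset C_{0}$, while at most $e$ of the thirty projected lines can be components of $C_{0}$. Double counting the incidences (projected point on $C_{0}$, paired with one of the three projected lines through it) then yields the bound $m\le e(36-e)/3$ for the number $m$ of projected points on $C_{0}$. A direct check shows $e(36-e)/3<60-(6-e)^{2}$ for each $e\in\{1,\dots,5\}$, giving the required contradiction.

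For existence the naive count $24+28-84=-32$ is heavily negative, so the cone is genuinely unexpected. The plan is to work directly with the twenty-four generators $F_{1},\dots,F_{24}$ of $I(Z_{60})_{6}$ from Lemma \ref{lem: 24 generators of J}: for $P=[a:b:c:d]$ form the $56\times 24$ matrix $M(P)$ whose rows record the values at $P$ of the partial derivatives of orders $0,1,\dots,5$ of the generators. A nonzero kernel vector of $M(P)$ produces the sought cone $F_{P}=\sum c_{i}F_{i}\in I(Z_{60})_{6}\cap\mathfrak{m}_{P}^{6}$. The aim is to exhibit coefficients $c_{i}=c_{i}(a,b,c,d)$ depending polynomially on $P$ with $F_{P}\not\equiv 0$, equivalently to verify that every $24\times 24$ minor of $M(P)$ vanishes identically in $(a,b,c,d)$. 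Combined with the uniqueness bound this forces $\dim\ker M(P)=1$ on a Zariski-open set, which yields both existence and the asserted uniqueness of the cone.

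The main obstacle is this existence step. Upper semicontinuity of $\dim\ker M(P)$ guarantees only that the kernel can jump \emph{up} on closed sets, so verifying the rank drop for a single $P_{0}$ at best produces cones on a proper closed subvariety, not on a nonempty open set. One must therefore either exhibit a symbolic universal formula for the coefficients $c_{i}(a,b,c,d)$, presumably guided by the decomposition of $I(Z_{60})_{6}$ under the $\overline{G_{80}}$-action inherited from the Heisenberg group $H_{2,2}$, or perform a symbolic rank computation of $M(P)$ as a matrix in the indeterminates $a,b,c,d$ (e.g.\ via Macaulay2). Finding a conceptual reason for the existence of this unexpected cone is the heart of the proof.
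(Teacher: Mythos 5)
Your reduction to plane sextics through $\pi_{P}(Z_{60})$, the count $24+28-84=-32$ certifying unexpectedness, and the uniqueness argument are all sound. In fact your uniqueness proof is a genuinely different (and arguably more self-contained) route than the paper's: the paper observes that its explicit cone $F$ is irreducible for general $P$ and that a second cone would meet it in $36$ lines, which cannot cover $Z_{60}$ once $P$ avoids the secant variety; you instead run a fixed-component/B\'ezout analysis on a putative pencil, using the $(60_{3},30_{6})$ incidence structure from Lemma \ref{lemma:Heseinebrg-lines-points} to bound the number of projected points on a fixed curve of degree $e$ by $e(36-e)/3$. The numerics you quote do close every case $e\in\{1,\dots,5\}$, and the argument needs nothing beyond the incidence data.

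The genuine gap is existence, and you have named it yourself: your proposal stops at a \emph{plan} for showing that the $56\times24$ interpolation matrix $M(P)$ has nontrivial kernel for general $P$, correctly noting that semicontinuity makes a single numerical specialization insufficient (it would only certify cones over a closed subset). Without carrying out the symbolic rank computation or producing a universal kernel vector, the theorem is not proved --- the $\calc(6)$ property is precisely the assertion that the cone exists for \emph{general} $P$. The paper resolves this exactly the way you anticipate: it exhibits a closed-form sextic $F$ whose coefficients are polynomials in $(a,b,c,d)$, built from the $24$ generators of $I(Z_{60})$ listed in Lemma \ref{lem: 24 generators of J} paired with binomial coefficients $1,5,10$, found by Singular and verifiable by the accompanying script; membership in $I(Z_{60})_{6}\cap\mathfrak{m}_{P}^{6}$ holds identically in $a,b,c,d$, which is the universal certificate you were looking for. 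The ``conceptual reason'' you ask for is also addressed there: by the BMSS duality of \cite{HMNT}, $F$ read as a polynomial in $(a,b,c,d)$ is the tangent cone at $P$ of the surface $F$ in $(x,y,z,w)$, and since that surface is itself a cone with vertex $P$ the equation must be symmetric under exchanging the two sets of variables --- which is exactly the structure the explicit formula displays. So your architecture is right, but the load-bearing existence step remains to be supplied.
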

\begin{proof}
   Let $P=(a:b:c:d)$ be a general point in $\P^3$. Then
\begin{align*}
F = & xy(x^4-y^4)cd(c^4-d^4)+xz(z^4-x^4)bd(b^4-d^4)+xw(x^4-w^4)bc(b^4-c^4)\\
+& yz(y^4-z^4)ad(a^4-d^4)+yw(w^4-y^4)ac(a^4-c^4)+zw(z^4-w^4)ab(a^4-b^4)\\
+& 5xy(z^4-w^4)cd(a^4-b^4)+5xz(y^4-w^4)bd(c^4-a^4)+5xw(y^4-z^4)bc(a^4-d^4)\\
+& 5yz(x^4-w^4)ad(b^4-c^4)+5yw(x^4-z^4)ac(d^4-b^4)+5zw(x^4-y^4)ab(c^4-d^4)\\
+& 10yw(x^2y^2-z^2w^2)ac(c^2d^2-a^2b^2)+10xw(x^2y^2-z^2w^2)bc(a^2b^2-c^2d^2)\\
+& 10yz(x^2y^2-z^2w^2)ad(a^2b^2-c^2d^2)+10xz(x^2y^2-z^2w^2)bd(c^2d^2-a^2b^2)\\
+& 10zw(x^2z^2-y^2w^2)ab(a^2c^2-b^2d^2)+10xw(x^2z^2-y^2w^2)bc(b^2d^2-a^2c^2)\\
+& 10yz(x^2z^2-y^2w^2)ad(b^2d^2-a^2c^2)+10xy(x^2z^2-y^2w^2)cd(a^2c^2-b^2d^2)\\
+& 10zw(y^2z^2-x^2w^2)ab(a^2d^2-b^2c^2)+10yw(y^2z^2-x^2w^2)ac(b^2c^2-a^2d^2)\\
+& 10xz(y^2z^2-x^2w^2)bd(b^2c^2-a^2d^2)+10xy(y^2z^2-x^2w^2)cd(a^2d^2-b^2c^2)
\end{align*}
   defines a cone of degree $6$ with the vertex at $P$. Being unexpected cone for $\mathcal{C}(6)$ follows immediately from Lemma \ref{lem: 24 generators of J} since a point of multiplicity $6$ is expected to impose $56$ conditions.

   The equation of $F$ has been found by Singular \cite{Singular} and can be verified by the script \cite{60script} accompanying our manuscript. Once the equation is there, the claimed properties can be checked, at least in principle, by hand. However, the highly symmetric form of $F$, with respect to the sets of variables $\left\{x,y,z,w\right\}$ and $\left\{a,b,c,d\right\}$ is not a coincidence. It was established in \cite{HMNT} that the BMSS-duality, observed first in \cite{BMSS}, implies that the equation of $F$, considered as a polynomial in variables $\left\{a,b,c,d\right\}$, describes the tangent cone at $P$ of the surface defined by $F$ in variables $\left\{x,y,z,w\right\}$. Since the set of zeroes of $F$ is a cone with vertex $P$, it is the same cone in both sets of variables.

   The property that $F$ is unique follows easily from the fact that for $P$ general the polynomial $F$ is irreducible (it is a cone over a smooth curve of degree $6$) and any other cone of degree $6$ with vertex at $P$ and multiplicity $6$ would intersect $F$ in $36$ lines. Taking $P$ general, away of the secant variety of $Z_{60}$, i.e., away of the union of lines through pairs of points in $Z_{60}$ these $36$ lines would not be enough to cover the whole set $Z_{60}$.
\end{proof}

\begin{theorem}[Unexpected surface with $3$ general points]
\label{thm: unexpected with 3 singularities}
   Let $P,Q_1,Q_2$ be general points in $\P^3$. Then there exists a unique surface of degree $6$ vanishing in all points of $Z_{60}$ with a point of multiplicity $4$ at $P$ and multiplicity $2$ at both $Q_1$ and $Q_2$.
\end{theorem}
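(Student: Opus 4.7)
The plan is to carry out the same two-pronged attack as in the proof of Theorem \ref{thm: unexpected cone of degree 6}: first reduce the statement to the assertion that a certain linear system has dimension exactly one, then address the lower and upper bounds on this dimension separately. By Lemma \ref{lem: 24 generators of J}, sextics vanishing on $Z_{60}$ form the $24$-dimensional space $J_6=I(Z_{60})_6$. A quadruple point at $P$ imposes at most $\binom{6}{3}=20$ linear conditions on $J_6$, and each double point at $Q_i$ imposes at most $\binom{4}{3}=4$ conditions. Thus the expected dimension of the space in question is $\max\{0,\,24-20-4-4\}=0$, so the mere existence of a surface satisfying the stated conditions is already unexpected in the sense of Definition \ref{def:unexpected hypersurface}.

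For the upper bound I would invoke upper semi-continuity of $(P,Q_1,Q_2)\mapsto h^0(J_6\otimes I(4P+2Q_1+2Q_2))$ on $(\P^3)^3$. After specialising $P,Q_1,Q_2$ to a random triple with simple coordinates, the $28\times 24$ matrix whose rows record the values and Taylor coefficients (through order $3$ at $P$ and order $1$ at each $Q_i$) of the $24$ generators listed in \eqref{eq: 24 generators} is formed, and its kernel is computed in Singular \cite{Singular}. Observing that the kernel is one-dimensional at this specialisation forces the generic value of $h^0$ to be at most $1$.

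The main obstacle is the lower bound, i.e., producing a nonzero surface for a \emph{generic} choice of $(P,Q_1,Q_2)$. Following the template of Theorem \ref{thm: unexpected cone of degree 6}, I would use Singular to find an explicit polynomial $F(x,y,z,w;P,Q_1,Q_2)$ whose coefficients are polynomial functions of the coordinates of the general points, and then verify by direct substitution that $F$ lies in $J_6$ and vanishes to order $4$ at $P$ and to order $2$ at each $Q_i$. Unlike the cone case the symmetric role of the single general point is broken, so no BMSS-type duality in the sense of \cite{BMSS,HMNT} is immediately available to predict the shape of $F$; one has to rely on the computer algebra system to produce the equation at a specific triple, recognise its polynomial dependence on $(P,Q_1,Q_2)$, and promote it to a family by checking that the determinantal condition for the kernel to be nonzero holds identically on $(\P^3)^3$ rather than only on a closed subset. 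Combining existence with the upper bound from the previous paragraph then yields $h^0=1$ on a dense open subset of $(\P^3)^3$, which gives both existence and uniqueness as stated.
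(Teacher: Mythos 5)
Your reduction to linear algebra, your count of the virtual dimension $\max\{0,24-20-4-4\}=0$, and your semicontinuity argument for the upper bound $h^0\le 1$ are all fine and consistent with what the paper does. The gap is in your lower bound, which is the crux of the theorem: semicontinuity only bounds $h^0$ from above, so your specialisation computation cannot produce existence, and the two routes you propose for a generic-existence proof are exactly the ones the paper reports as infeasible. The authors state explicitly that they were unable to write down $F(x,y,z,w;P,Q_1,Q_2)$ (already for the quadruple point alone the coefficients are degree-$45$ polynomials in $a,b,c,d$) and that no computer algebra system they tried could determine the sextic through $4P+2Q_1+2Q_2$; likewise, checking that all $24\times24$ minors of a $28\times24$ matrix with entries polynomial in the twelve coordinates of $(P,Q_1,Q_2)$ vanish identically is not a computation one can expect to carry out.

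The missing idea is to decouple $P$ from $Q_1,Q_2$. The paper forms only the $20\times24$ matrix of the order-$\le 3$ Taylor coefficients at a \emph{symbolic} general point $P=(a{:}b{:}c{:}d)$ of the $24$ generators from Lemma \ref{lem: 24 generators of J}, and has Singular compute that its rank is $15$, not $20$. Hence the sextics through $Z_{60}$ with a point of multiplicity $4$ at a general $P$ form a system of dimension $24-15=9$; this rank computation in only four variables is feasible, and it is where the unexpectedness actually lives. The two double points then impose at most $4+4=8$ further conditions, so the system $|6P^{[4]}+2Q_1+2Q_2|$ has dimension at least $9-8=1>0$, giving existence with no need for an explicit equation; your upper-bound argument (or a rank check at one specialisation of $Q_1,Q_2$) then yields dimension exactly $1$, i.e.\ uniqueness. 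Without identifying the rank drop from $20$ to $15$, your outline has no workable path to existence.
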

\begin{proof}
   Let $P=(a:b:c:d)$. We consider first sextics vanishing at all the points in $Z_{60}$ and at a general point $P$ to order $4$. We are not able to write them explicitly down because the equations are too complex. In fact, the coefficients in front of monomials of degree $6$ in variables $x,y,z,w$ are polynomials of degree $45$ in variables $a,b,c,d$. This is quite surprising when confronted with the proof of Theorem \ref{thm: unexpected cone of degree 6}.

   Our approach is quite standard. We outline it here and refer to our script \cite{60script} for details. We  build an interpolation matrix whose columns are the $24$ generators of $J=I(Z_{60})$. In the rows we write down one by one all the $20$ differentials of order $4$ of the generators and evaluate them at $P$. This gives a $20\times 24$ matrix. Even though many of its coefficients are $0$, the matrix is still too large to reproduce here. Nevertheless it is simple enough, so that Singular can compute its rank, which is $15$. That means that vanishing at $P$ to order $4$ imposes only $15$ instead of the expected $20$ conditions on generators of $J$. With this fact established, the remaining part of the proof is easy. We have linear system of sextics of dimension $24-15=9$, so it allows $2$ singularities $Q_1$ and $Q_2$ anywhere. Interestingly, no symbolic algebra program we asked, was able to determine the coefficients of the unique sextic vanishing at $P$ to order $4$ and at $Q_1,Q_2$ to order $2$. We expect that their coefficients in coordinates of the singular points are huge.
\end{proof}
\begin{remark}
   Note that unexpected hypersurfaces with multiple singular points seem to be quite rare. The sextics described in Theorem \ref{thm: unexpected with 3 singularities} are the only example, we are aware of, apart of a series of examples related to Fermat-type arrangements constructed by the third author in \cite{Szp18multi}.
\end{remark}
%*****************************************************************************

\section{Projections of $Z_{60}$}
   In this section we study general projections of $Z_{60}$. Our motivation comes from a recent work of Chiantini and Migliore \cite{ChiantiniMigliore19}.
\begin{definition}[\geproci \, property]
\label{def geproci}
   We say that a finite set $Z\subset\P^3$ has a \emph{general projection complete intersection} property (\geproci \, in short), if its projection from a general point in $\P^3$ to $\P^2$ is a complete intersection.
\end{definition}
   Obvious examples of \geproci\, sets are complete intersections in $\P^3$ with the property that one of the intersecting surfaces is a plane. Thus it is interesting to study non-degenerate (i.e. not contained in a hyperplane) sets $Z$ with the \geproci\, property.

Chiantini and Migliore  observed in \cite{ChiantiniMigliore19} that such sets exist.
They distinguished grids as a wide class of sets which enjoy the \geproci\,  property.
\begin{definition}[$(a,b)$-grid]
Let $a, b$ be a positive integers. A set $Z$ of $ab$ points in $\mathbb{P}^{3}$ is an $(a,b)$ - grid if there exists a set of pairwise skew lines $\ell_{1}, ...,\ell_{a}$ and a set of $b$ skew lines $\ell_{1}', ..., \ell_{b}'$, such that
$$Z = \{\ell_{i} \cap \ell_{j}' \, :\, i \in \{1, ..., a\}, j \in \{1, ..., b\}\}.$$
This means in particular that for all $i,j$ the lines $\ell_{i}, \ell_j'$ are different and incident.
\end{definition}
Indeed, projecting to $\P^2$ lines in $\P^3$ from a general point in $\P^3$ results again in lines in $\P^2$. Let $\pi: \P^3\dashrightarrow \P^2$ be such a projection. Then $\pi(Z)$ is the complete intersection of curves $C=\pi(\ell_1)+\ldots +\pi(\ell_a)$ and $D=\pi(\ell_1')+\ldots +\pi(\ell_b')$.

Appendix to \cite{ChiantiniMigliore19} contains examples of sets of points in $\P^3$, which are not $(a,b)$-grids, but which have the \geproci\, property. Such sets seem to be extremely rare, which motivated the following problem.
\begin{question}[Chiantini, Migliore, \cite{ChiantiniMigliore19} Question 7.1]
Are there any examples of $ab$ points in $\mathbb{P}^{3}$, other than $ab = 12, 16, 20$ or $24$, that are not $(a, b)$-grids but that have a general projection that is a complete intersection in $\mathbb{P}^{2}$ of type $(a, b)$?
\end{question}
   It turns out that the set $Z_{60}$ is not a grid and yet it has the \geproci \, property.
\begin{proposition}
The set $Z_{60}$ is not an $(a,b)$-grid for any choice of $a,b$.
\end{proposition}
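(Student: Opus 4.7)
The plan is to reach a contradiction with Lemma \ref{lem: 24 generators of J}, which says that $I(Z_{60})$ is generated in degree $6$; in particular $I(Z_{60})$ contains no element of degree at most $5$, so no quadric vanishes on $Z_{60}$. I will show that every hypothetical grid structure on $Z_{60}$ would force the sixty points to lie on a single quadric surface, which yields the desired contradiction.

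First, a grid structure requires $ab = 60$, leaving (after relabelling so that $a \le b$) only the cases $(1,60), (2,30), (3,20), (4,15), (5,12), (6,10)$. When $a \in \{1,2\}$, all sixty points already lie on a union of at most two pairwise skew lines, and any reduced curve in $\P^3$ of degree at most $2$ is contained in a positive-dimensional linear system of quadrics (a single line lies on a $7$-dimensional family, and two skew lines on a $4$-dimensional family, as one verifies directly with normal forms $V(x,y)$ and $V(z,w)$). This immediately produces a quadric through $Z_{60}$, contradicting the lemma.

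For $a \ge 3$ I would invoke the classical fact that three pairwise skew lines in $\P^3$ lie on a unique smooth quadric $Q$ (smoothness follows because two skew lines cannot share a plane, ruling out the reducible case). Applied to the first three lines $\ell_1,\ell_2,\ell_3$ of family one, this produces such a $Q$; each line $\ell_j'$ of the second family meets $\ell_1,\ell_2,\ell_3$ in three distinct points, so $\ell_j'\cap Q$ has at least three points and B\'ezout forces $\ell_j'\subset Q$. The symmetric argument shows that every remaining line $\ell_i$ of the first family also lies on $Q$, so all sixty grid points lie on $Q$, again contradicting Lemma \ref{lem: 24 generators of J}.

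The entire substance of the argument is packaged in Lemma \ref{lem: 24 generators of J}; everything else is routine projective geometry. The only mildly delicate point is that one must handle the degenerate small-$a$ cases separately from the uniform ``three skew lines determine a quadric'' argument used for $a\ge 3$, but this is purely a matter of bookkeeping rather than a real obstacle.
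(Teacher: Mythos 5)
Your proof is correct, but it takes a genuinely different route from the paper. The paper argues combinatorially: in an $(a,b)$-grid each line of the first family contains exactly $b$ grid points and each line of the second family exactly $a$ (the intersection points are distinct because the lines within each family are pairwise skew), so a grid structure on $60$ points forces some line to contain at least $10$ collinear points; but the maximal number of collinear points in $Z_{60}$ is $6$ (attained on the lines of $\L_{30}$), which kills every factorization of $60$. You instead use the classical fact that an $(a,b)$-grid with $a,b\ge 3$ lies on a quadric surface, and derive a contradiction from Lemma \ref{lem: 24 generators of J}, since an ideal generated in degree $6$ contains no nonzero form of degree $\le 5$; the degenerate cases $a\le 2$ are handled directly. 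Both arguments are sound. The paper's is more elementary and self-contained, needing only the incidence data of the configuration; yours outsources the verification to the algebraic lemma and to standard projective geometry, which is arguably cleaner once that lemma is in hand. Two small remarks on your write-up: smoothness and uniqueness of the quadric through three pairwise skew lines are not actually needed (existence, which follows from a dimension count, plus B\'ezout for lines not contained in the quadric suffices), and your parenthetical justification of smoothness omits the cone case, where all lines pass through the vertex and hence pairwise meet. Neither affects the validity of the argument.
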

\begin{proof}
   The only possibilities for the numbers $a$ and $b$ up to their order are:
   $$(2,30),\;\; (3,20),\;\; (4,15),\;\; (5,12)\; \mbox{ and }\;(6,10).$$
   It is easy to check that each of distinguished lines
   $\ell_1,\ldots,\ell_{30}$ contains $6$ points from $Z_{60}$ and this is the highest number of collinear points in $Z_{60}$. There are additional $320$ lines meeting $Z_{60}$ in $3$ points. In any case, the number of collinear points in $Z_{60}$ is too small to allow the grid structure.
\end{proof}
\begin{theorem}[$Z_{60}$ is \geproci]\label{thm: geproci for Z60}
   The set $\pi(Z_{60})$ has the \geproci \, property. More precisely, its general projection to $\P^2$ is a complete intersection of curves of degree $6$ and $10$.
\end{theorem}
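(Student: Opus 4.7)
The plan is to exhibit two plane curves, of degrees $6$ and $10$, that lie in the ideal of $\pi(Z_{60})$ and share no common component, and then conclude by Bezout. Here $\pi\colon\P^3\dashrightarrow\P^2$ is the projection from a general point $P$, chosen off the finite union of secant lines of $Z_{60}$ so that $\pi(Z_{60})$ consists of $60$ distinct points.

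The sextic is in hand from Theorem \ref{thm: unexpected cone of degree 6}: the unique cone $F$ of degree $6$ with vertex $P$ through $Z_{60}$ is irreducible (a cone over a smooth plane sextic, as shown in that proof), so $f_6:=\pi(F)$ is an irreducible plane sextic vanishing on $\pi(Z_{60})$. What remains is to produce a decic $f_{10}\in I(\pi(Z_{60}))_{10}$ not divisible by $f_6$. The obvious decics already in the ideal, namely multiples of $f_6$ by quartic forms on $\P^2$, form a subspace of dimension $\binom{6}{2}=15$, so the substantive content is to verify that $\dim I(\pi(Z_{60}))_{10}\geq 16$.

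This dimension verification is the main obstacle. Transferred back to $\P^3$ it asks that the space of cones of degree $10$ with vertex $P$ through $Z_{60}$ have dimension at least $16$, and the cleanest route is the interpolation-matrix method used already in Theorem \ref{thm: unexpected with 3 singularities}. Concretely, one expands $J_{10}$ using the $24$ explicit sextic generators of $J=I(Z_{60})$ from Lemma \ref{lem: 24 generators of J} multiplied by quartic forms in $x,y,z,w$, imposes the condition of multiplicity $10$ at $P=(a:b:c:d)$ by evaluating the appropriate partial derivatives, and checks with Singular (\cite{60script}) that the resulting solution space has the desired dimension. A symbolic by-hand verification looks infeasible because the entries depend on the coordinates of $P$ in high degree.

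Once such an $f_{10}$ is produced the conclusion is immediate: $f_6$ is irreducible and $f_{10}\notin(f_6)$, so $V(f_6)$ and $V(f_{10})$ share no irreducible component, and by Bezout their scheme-theoretic intersection has length $60$; the $60$ distinct points of $\pi(Z_{60})$ already account for this length, hence $\pi(Z_{60}) = V(f_6)\cap V(f_{10})$ is a complete intersection of type $(6,10)$. A more structural alternative, worth attempting, is to find three disjoint $10$-element subsets of $\L_{30}$ each covering $Z_{60}$ exactly once (consistent with Lemma \ref{lemma:Heseinebrg-lines-points}, since every point of $Z_{60}$ lies on exactly three lines of $\L_{30}$); the product of the ten projected linear forms in any such class would then be a completely reducible decic in $I(\pi(Z_{60}))$, which cannot be divisible by the irreducible $f_6$.
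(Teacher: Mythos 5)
Your proposal is correct, and interestingly your throwaway ``structural alternative'' at the end is essentially the paper's actual proof, while your primary route is a genuinely different one. The paper takes the line-geometric path: it exhibits (Table \ref{tab: 10 disjoint lines}) six choices of $10$ pairwise disjoint lines from $\L_{30}$ covering $Z_{60}$, projects them to get an explicitly completely reducible decic, and then argues exactly as you do at the end --- $C_6$ is irreducible, so the $60$ distinct image points exhaust the B\'ezout number $6\cdot 10=60$ and the intersection is scheme-theoretic. This buys explicit equations (the paper lists all $30$ projected lines in \eqref{eq:30 lines}) and leads naturally to the half-grid notion and to Corollary \ref{cor: Z60 minus lines}; it also reveals that the degree-$10$ curve is far from unique. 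Your primary route --- showing $\dim I(\pi(Z_{60}))_{10}\ge 16$ by an interpolation-matrix computation for a degree-$10$ cone with vertex at $P$, then choosing any $f_{10}\notin(f_6)$ --- is logically sound and more uniform (it would detect the \geproci\ property even absent a nice covering by lines), but it is computationally much heavier than the rank-$15$ computation of Theorem \ref{thm: unexpected with 3 singularities}: here one must certify the rank of a roughly $220\times 840$ matrix over $\C[a,b,c,d]$, and the outcome is asserted rather than performed. Two small points on your alternative: only \emph{one} covering class of $10$ disjoint lines is needed, not three; and a partition of $\L_{30}$ into three such classes, which you posit as consistent with Lemma \ref{lemma:Heseinebrg-lines-points}, does not in fact occur --- the six classes $A,\dots,F$ found in the paper pairwise overlap, each line lying in exactly two of them --- so you should weaken that step to the existence of a single class.
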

\begin{proof}
Let $P=(a:b:c:d)$ be a general point in $\P^3$ and let $\pi$ be the rational map
   $$\P^3\ni(x:y:z:w)\dashrightarrow (ay-bx:bz-cy:cw-dz)\in\P^2.$$
   Then, the cone $F$ associated to $P$ in the proof of Theorem \ref{thm: unexpected cone of degree 6} projects to the following curve of degree $6$ in variables $(s:t:u)$
\begin{align*}
C_6 = & b(a^4-b^4)tu(t^4-u^4)+c(a^4-c^4)su(u^4-s^4)+d(a^4-d^4)st(s^4-t^4)\\
+ & 5b(d^4-c^4)s^4tu+5c(b^4-d^4)st^4u+5d(c^4-b^4)stu^4\\
+ & 10b(a^2d^2-b^2c^2)s^2t^3u+10c(a^2d^2-b^2c^2)s^3t^2u+10d(a^2c^2-b^2d^2)s^3tu^2\\
+ & 10b(b^2d^2-a^2c^2)s^2tu^3+10c(a^2b^2-c^2d^2)st^2u^3+10d(c^2d^2-a^2b^2)st^3u^2.
\end{align*}
   By construction, the projection of $Z_{60}$ is contained in $C_6$. Somewhat surprisingly, there is a certain ambiguity in the choice of a curve of degree $10$ cutting out on $C_6$ precisely the set $Z_{60}$. The most appealing way comes from the geometry of the arrangement of lines $\mathbb{L}_{30}$.
   Using explicit equations of lines $\ell_i$ and coordinates of points $P_i$, it is easy to check that there are $6$ ways of choosing $10$ disjoint lines among $\left\{\ell_1,\ldots,\ell_{30}\right\}$ covering the set $Z_{60}$.
   These selections are indicated in Table \ref{tab: 10 disjoint lines}.
%\begin{table}[h!]
%   \centering
%   \renewcommand{\arraystretch}{1.2}
%   \begin{tabular}{|l|c|c|c|c|c|c|}
\begin{tabularx}{\linewidth}{|l|c|c|c|c|c|c|}%{@{}lX@{}}
   \caption{The division of $60$ lines in $6$ groups of $10$ disjoint lines.}
   \label{tab: 10 disjoint lines}
\\
\toprule
   \hline
    & A & B & C & D & E & F\\
   \hline
   $\ell_{1}$ & + & + & & & & \\
   \hline
   $\ell_2$ & & & & & + & + \\
   \hline
   $\ell_3$ & & & + & + & & \\
   \hline
   $\ell_4$ & & & + & & + & \\
   \hline
   $\ell_5$ & & & & + & & + \\
   \hline
   $\ell_6$ & & & + & + & & \\
   \hline
   $\ell_7$ & & & & & + & + \\
   \hline
   $\ell_8$ & & & & + & & + \\
   \hline
   $\ell_9$ & & & + & & + & \\
   \hline
   $\ell_{10}$ & & & + & & & + \\
   \hline
   $\ell_{11}$ & & + & & + & & \\
   \hline
   $\ell_{12}$ & + & & & & + & \\
   \hline
   $\ell_{13}$ & + & & & + & & \\
   \hline
   $\ell_{14}$ & & + & & & + & \\
   \hline
   $\ell_{15}$ & + & & & & + & \\
   \hline
   $\ell_{16}$ & & + & & + & & \\
   \hline
   $\ell_{17}$ & & + & & & + & \\
   \hline
   $\ell_{18}$ & + & & & + & & \\
   \hline
   $\ell_{19}$ & & & & + & + & \\
   \hline
   $\ell_{20}$ & + & & + & & & \\
   \hline
   $\ell_{21}$ & & + & & & & + \\
   \hline
   $\ell_{22}$ & & + & + & & & \\
   \hline
   $\ell_{23}$  & + & & & & & + \\
   \hline
   $\ell_{24}$ & & + & & & & + \\
   \hline
   $\ell_{25}$ & + & & + & & & \\
   \hline
   $\ell_{26}$ & + & & & & & + \\
   \hline
   $\ell_{27}$ & & + & + & & & \\
   \hline
   $\ell_{28}$ & & & & + & + & \\
   \hline
   $\ell_{29}$ & & & + & & & + \\
   \hline
   $\ell_{30}$ & + & + & & & & \\
   \hline
%   \end{tabular}
\end{tabularx}
%\end{table}
   Since the curve $C_6$ is irreducible, the image under the projection of any selection of $10$ disjoint lines out of $\L_{30}$, cuts $C_6$ in exactly $60$ distinct points. It follows that $C_6$ and lines intersect transversally and thus the intersection is scheme theoretic.

   We complete our considerations providing explicit equations of images $\ell_i'$ of the $30$ lines from~$\mathbb{L}_{30}$.
\begin{equation}\label{eq:30 lines}
\begin{array}{l}
 \ell_{1}' = s, \\
 \ell_{2}' = (c^2-cd)s+(ac-ad-bc+bd)t+(-ab+b^2)u, \\
 \ell_{3}' = (c^2-cd)s+(ac-ad+bc-bd)t+(-ab-b^2)u, \\
 \ell_{4}' = (i\cdot cd+c^2)s+(i\cdot(ad+bc)+ac-bd)t+(i\cdot ab-b^2)u, \\
 \ell_{5}' = (i\cdot cd+c^2)s+(i\cdot (ad-bc)+ac+bd)t+(i\cdot ab+b^2)u \\
 \ell_{6}' = (c^2+cd)s+(ac+ad-bc-bd)t+(ab-b^2)u, \\
 \ell_{7}' = (c^2+cd)s+(ac+ad+bc+bd)t+(ab+b^2)u, \\
 \ell_{8}' = (-i\cdot cd+c^2)s+(-i\cdot(ad-bc)+ac+bd)t+(-i\cdot ab+b^2)u, \\
 \ell_{9}' = (-i\cdot cd+c^2)s+(-i\cdot(ad+bc)+ac-bd)t+(-i\cdot ab-b^2)u, \\
 \ell_{10}' = cs+at, \\
 \ell_{11}' = (bc-cd)s+(-ad+bc)t+(-ab+bc)u, \\
 \ell_{12}' = (bc-cd)s+(-ad-bc)t+(-ab-bc)u, \\
 \ell_{13}' = (i\cdot cd+bc)s+i\cdot(ad-bc)t+(i\cdot ab-bc)u, \\
 \ell_{14}' = (i\cdot cd+bc)s+i\cdot(ad+bc)t+(i\cdot ab+bc)u, \\
 \ell_{15}' = (bc+cd)s+(ad+bc)t+(ab-bc)u, \\
 \ell_{16}' = (bc+cd)s+(ad-bc)t+(ab+bc)u, \\
 \ell_{17}' = (-i\cdot cd+bc)s-i\cdot(ad+bc)t+(-i\cdot ab+bc)u, \\
 \ell_{18}' = (-i\cdot cd+bc)s-i\cdot(ad-bc)t+(-i\cdot ab-bc)u, \\
 \ell_{19}' = cds+adt+abu, \\
 \ell_{20}' = (bc-c^2)s+(-ac+bd)t+(b^2-bc)u, \\
 \ell_{21}' = (bc-c^2)s+(-ac-bd)t+(-b^2+bc)u, \\
 \ell_{22}' = (i\cdot c^2+bc)s+i\cdot(ac-bd)t+(-i\cdot b^2+bc)u, \\
 \ell_{23}' = (i\cdot c^2+bc)s+i\cdot(ac+bd)t+(i\cdot b^2-bc)u, \\
 \ell_{24}' = (bc+c^2)s+(ac+bd)t+(b^2+bc)u,\\
 \ell_{25}' = (bc+c^2)s+(ac-bd)t+(-b^2-bc)u, \\
 \ell_{26}' = (-i\cdot c^2+bc)s-i\cdot(ac+bd)t+(-i\cdot b^2-bc)u, \\
 \ell_{27}' = (-i\cdot c^2+bc)s-i\cdot (ac-bd)t+(i\cdot b^2+bc)u, \\
 \ell_{28}' = t, \\
 \ell_{29}' = dt+bu, \\
 \ell_{30}' = u.
\end{array}
\end{equation}
\end{proof}
\begin{remark}
   One might expect that the projection of $10$ disjoint lines in $\L_{30}$ is somehow special. However, it can be checked that the contrary situation holds since the lines form the star configuration -- they intersect only in pairs producing $45$ double intersection points. The intersections take place away from the $C_6$ curve.
\end{remark}
   We derive for completeness the following corollary to Theorem \ref{thm: geproci for Z60}.
\begin{corollary}[Subsets of $Z_{60}$ with the \geproci \, property]\label{cor: Z60 minus lines}
   Removing $6$ collinear points from $Z_{60}$ produces a set $Z$ of $54$ points with the \geproci \, property. Their projection is a complete intersection of the curve of degree $6$ and now the remaining $9$ lines covering $Z$.
   This procedure can be repeated with remaining sets of $6$ collinear points. Thus we get sets of $60, 54, 48, 42, 36, 30$, and $24$  points, which are not $(a,b)$-grids, with the \geproci \, property.
\end{corollary}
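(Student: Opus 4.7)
The plan is to apply Theorem \ref{thm: geproci for Z60} iteratively. That theorem identifies $\pi(Z_{60})$ scheme-theoretically with the transverse complete intersection $C_6\cap D_{10}$, where $D_{10}=\ell_{i_1}'\cup\cdots\cup\ell_{i_{10}}'$ is the union of ten projected disjoint lines chosen from any single column of Table \ref{tab: 10 disjoint lines}, and each $\ell_{i_k}'$ meets $C_6$ transversally in the six images of the six points of $Z_{60}$ lying on $\ell_{i_k}$. Every line in $\mathbb{L}_{30}$ appears in two of the six columns of Table \ref{tab: 10 disjoint lines}, so any six-collinear subset one wishes to delete occurs as one of the ten factors of $D_{10}$ for some chosen grouping. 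Given such a line $\ell$, deleting its six points from $Z_{60}$ corresponds to dropping $\ell'$ from $D_{10}$; the remaining fifty-four points form $C_6\cap D_9$, with $D_9$ the union of the nine residual projected lines. Transversality with $D_{10}$ restricts to transversality with $D_9$, so this is a genuine complete intersection of type $(6,9)$. Iterating this removal inside the same grouping then produces sets of sizes $60-6k$ for $k=0,1,\dots,6$, each cut out transversally on $C_6$ by a union of $10-k$ disjoint projected lines, which establishes the \geproci\, property for all seven advertised sizes.

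For the non-grid claim I would use the incidence data recalled in Section \ref{sec: history} together with Lemma \ref{lemma:Heseinebrg-lines-points}: the maximum number of collinear points of $Z_{60}$ is six, attained only on the thirty lines of $\mathbb{L}_{30}$, while the other $320$ lines meeting $Z_{60}$ non-trivially do so in exactly three points each. A hypothetical $(a,b)$-grid on $ab=60-6k$ points forces $a,b\le 6$, ruling out the sizes $60,54,48,42$ at once since $ab>36$ in those cases. For the remaining sizes $36, 30, 24$ the candidate $(6,6)$, $(6,5)$, $(6,4)$ structures have to be excluded by a short incidence argument: the six-point lines inside the subset come only from $\mathbb{L}_{30}$, Klein's ten fundamental quadrics each contain twelve of these lines (in a pair of rulings), and one checks case-by-case that no such quadric supplies two complete skew families whose $ab$ intersection points all lie in the subset.

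The principal obstacle is the bookkeeping for the non-grid verification in the small sizes, especially the $(6,6)$ case on $36$ points, where one must show that no Klein fundamental quadric simultaneously carries six of the kept lines in one ruling and six further lines of $\mathbb{L}_{30}$ in the other ruling forming a sub-grid on the retained points. The \geproci\, portion of the proof is, by contrast, essentially immediate once Theorem \ref{thm: geproci for Z60} and its transverse intersection structure are in hand, so the whole argument amounts to a small combinatorial exercise layered on top of the geometric input already available.
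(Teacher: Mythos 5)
Your outline of the \geproci\, part matches the paper's (implicit) argument, and your dispatch of the sizes $60,54,48,42$ via the bound of six collinear points is exactly what the paper does. The genuine gap is in the remaining sizes $36$, $30$, $24$: you correctly identify these as the cases needing work, but you do not carry out the verification, and the route you propose --- checking Klein's ten fundamental quadrics for a pair of complete skew rulings --- is both roundabout and not airtight. A $(6,6)$-grid determines \emph{some} quadric through its twelve lines, but nothing forces that quadric to be one of the ten fundamental ones, so restricting the case analysis to those quadrics is an unjustified step; and for the $(5,6)$ and $(4,6)$ candidates the transversal lines carry only $5$ or $4$ points, so the quadric framing does not directly engage the relevant incidences.

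The paper's actual check is a much more direct count, resting on the observation (which you state but do not exploit) that every line meeting $Z_{60}$ in more than $3$ points is one of the $30$ Klein lines. Hence any grid line carrying $\geq 4$ points of a subset of $Z_{60}$ lies in $\L_{30}$, and one only has to count such lines in the reduced sets. For $V_4$ ($36$ points) the only lines still containing $6$ points are the six remaining lines of the chosen family, which are mutually disjoint, so the twelve lines of a $(6,6)$-grid cannot be assembled. For $V_5$ ($30$ points) only two lines contain $5$ points, while a $(5,6)$-grid needs six of them; for $V_6$ ($24$ points) only two lines contain $4$ points, while a $(4,6)$-grid needs six. This finite count (computer-assisted but doable by hand) is the content your proposal leaves as ``bookkeeping''; without it, the non-grid claim for the three small sizes is unproved.
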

\begin{proof}
   The only feature to check is if the obtained sets of points are not grids. Removing points in $Z_{60}$ from one, two, or three lines from the same set of lines fixed among $A,B,C,D,E$ or $F$ certainly does not lead to any grid, because the maximal number of collinear points in $Z_{60}$ is $6$. Removing the fourth line, we get a set $V_4$ of $36$ points, which could, in principle, be a $(6,6)$-grid. However, a direct check (supported by computer, but also manageable by hand) shows that the only lines with $6$ points come from the set covering $Z_{60}$ selected for the procedure, say $A$. Going down to $30$ points, we obtain a set $V_5$ and we detect only two lines with $5$ points from $V_5$ (these lines come from two different families, for example if we run the procedure with the family $A$, then they belong to $D$ and $F$). The same lines are the only two lines with $4$ points from $V_6$ when we pass from $V_5$ to $V_6$ removing another set of $6$ collinear points. However, further removal fails as noted in Remark \ref{rem:18_is_grid}.
\end{proof}
\begin{remark}\label{rem:18_is_grid}
   Taking out collinear points as in Corollary \ref{cor: Z60 minus lines} we can arrive also at a set of $18$ points. Somewhat unexpectedly such sets turn out to grids.
\end{remark}
On the other hand, it is interesting to note that whereas the curve  $C_6$ of degree $6$ vanishing along $\pi(Z_{60})$ is unique, there are six ways to choose a completely reducible (i.e. splitting in lines) degree $10$ curve cutting out $\pi(Z_{60})$ on $C_6$. The example studied in this work motivates the following definition.
\begin{definition}[Half grid]
Let $a$, $d$ be positive integers. A set $Z$ of $ad$ points in $\P^3$ is an $(a,d)$ - half grid if there exists a set of mutually skew lines $\ell_1,\ldots, \ell_a$ covering $Z$ and a general projection of $Z$ to a plane is a complete intersection of images of $a$ lines with a (possibly reducible ) curve of degree $d$.
\end{definition}
It is clear that $Z_{60}$ is a half grid. Moreover, any grid is a half grid. It is also clear that the point in $Z$ are equidistributed over the lines. Taking $6$ collinear points out of $Z_{60}$ results also in a half grid.
\begin{problem}
Are there any half grids, but not grids in $\P^3$ other than $Z_{60}$ and its subgrids?
\end{problem}
\section{More subsets of $Z_{60}$ with the \geproci \,property}
In the Appendix to \cite{ChiantiniMigliore19} the first sets of points in $\mathbb{P}^{3}$ with the \geproci \, property have been identified. They are subsets of the set of points, which we call here $Z_{24}$, associated with the $F_{4}$ root system. This set is a subset of $Z_{60}$. We keep the numbering of points:
\[\begin{array}{lll}
P_{1} = [0:0:1:1] &  P_{3} = [0:0:1:-1] & P_{5} = [0:1:0:1] \\

P_{7} = [0:1:0:-1] &  P_{9} = [0:1:1:0] & P_{11} = [0:1:-1:0] \\

P_{13} = [1:0:0:1] & P_{15} = [1:0:0:-1] & P_{17} = [1:0:1:0] \\

P_{19} = [1:0:-1:0] & P_{21} = [1:1:0:0] & P_{23} = [1:-1:0:0] \\

P_{25} = [1:0:0:0] & P_{26} = [0:1:0:0] & P_{27} = [0:0:1:0] \\

P_{28} = [0:0:0:1] & P_{29} = [1:1:1:1] & P_{30} = [1:1:1:-1] \\

P_{31} = [1:1:-1:1] & P_{32} = [1:1:-1:-1] & P_{33} = [1:-1:1:1] \\

P_{34} = [1:-1:1:-1] & P_{35} = [1:-1:-1:1] & P_{36} = [1:-1:-1:-1].
\end{array}\]
As it was shown in Appendix to \cite{ChiantiniMigliore19}, the set $Z_{24}$ is not any $(a,b)$-grid, but its general projection onto $\mathbb{P}^{2}$ is a complete intersection of curves of degree $4$ and $6$. Using our results from the previous sections we would like to describe the geometry of degree $6$ curves that are crucial to construct the complete intersection of $24$ points in the plane.  First of all, by an easy inspection, we can find a subset $\mathbb{L}_{18}$ of $\mathbb{L}_{30}$ consisting of $18$ lines such that these lines and the set $Z_{24}$ form a $(24_{3},18_{4})$ point-line configuration in $\mathbb{P}^{3}$. The indices of lines in $\L_{18}$ corresponding to those in \eqref{eq:30 lines} together with groups from Table \ref{tab: 10 disjoint lines} are presented below: %(we omit the symbol $\ell$ to alleviate notation).
%\begin{equation}\label{eq: 18 lines}
\begin{align}\label{eq: 18 lines}
\begin{split}
\L_{18} = &\left\{ \ell_{1} (AB), \ell_{2} (EF), \ell_{3} (CD), \ell_{6} (CD), \ell_{7} (EF), \ell_{10} (CF),\ell_{11} (BD), \ell_{12} (AE), \ell_{15} (AE),\right.\\
  & \left.\ell_{16} (BD), \ell_{19} (DE), \ell_{20} (AC), \ell_{21} (BF), \ell_{24} (BF), \ell_{25} (AC), \ell_{28} (DE), \ell_{29} (CF), \ell_{30} (AB)
\right\}.
\end{split}
\end{align}
%\end{equation}
Choosing lines indexed by the same letter gives a subset of $6$ disjoint lines in $\L_{18}$ covering the set $Z_{24}$.
For example, the lines corresponding to $A$ are
$$\{\ell_1, \ell_{12}, \ell_{15}, \ell_{20}, \ell_{25}, \ell_{30}\}.$$
That the lines are disjoint is clear from the definition of groups $A,B,\ldots,F$. The covering property follows by a direct inspection. This shows, in particular, that $Z_{24}$ is a half grid.

It is important to observe that $Z_{24}$ cannot be obtained from $Z_{60}$ by removing points on disjoint lines. Thus, even though it is a subset of $24$ points in $Z_{60}$, it is not obtainable by the procedure described in Corollary \ref{cor: Z60 minus lines}. Indeed, the procedure diminishes in each the set $Z_{60}$ by sets of six collinear points. Since initially there are $10$ lines, each containing $6$ points from $Z_{60}$, in each step of the procedure there is certain number of these lines. On the other hand in $Z_{24}$ there are at most $4$ points in a line.

Indeed, in \eqref{eq: 18 lines} there are already $6$ lines from each family $A,B,\ldots,F$ involved.

\begin{proposition}\label{prop: Z24 is geproci}
   The set $Z_{24}$ has the \geproci \, property.
\end{proposition}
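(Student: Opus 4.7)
The plan is to mirror the strategy used in Theorem \ref{thm: geproci for Z60}, with the roles of the two factors reshaped to type $(4,6)$. The degree $6$ factor will come from a covering of $Z_{24}$ by six pairwise skew lines drawn from $\mathbb{L}_{18}$, and the degree $4$ factor from an (a priori unexpected) cone of degree $4$ over $Z_{24}$ with vertex at the projection point.

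For the degree $6$ factor, the analysis around \eqref{eq: 18 lines} already shows that each of the six families $A,B,\ldots,F$, restricted to $\mathbb{L}_{18}$, supplies $6$ pairwise skew lines covering $Z_{24}$ with exactly $4$ points per line. Fixing for instance $A=\{\ell_1,\ell_{12},\ell_{15},\ell_{20},\ell_{25},\ell_{30}\}$ and projecting from a general point $P\in\P^3$, these lines project to $6$ distinct lines in $\P^2$ whose union is a reducible sextic $D_6$ containing the $24$ projected points of $\pi(Z_{24})$.

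The crux is to exhibit a degree $4$ plane curve $C_4$ through $\pi(Z_{24})$, equivalently a cone of degree $4$ in $\P^3$ with vertex $P$ vanishing on $Z_{24}$. A naive dimension count pits $15$ monomials against $24$ interpolation conditions, so the existence of $C_4$ is genuinely unexpected. To establish it I would proceed as in the proofs of Theorems \ref{thm: unexpected cone of degree 6} and \ref{thm: unexpected with 3 singularities}: compute a generating set of $I(Z_{24})$ in low degree (restricting the $24$ sextic generators from Lemma \ref{lem: 24 generators of J} and adding new low-degree forms that arise upon dropping $36$ points), assemble with Singular the interpolation matrix encoding vanishing of order $4$ at a symbolic point $P=(a\!:\!b\!:\!c\!:\!d)$, and verify that its rank drops enough to leave a nontrivial kernel; the resulting kernel element is a cone because of BMSS-style duality. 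The same quartic is also already produced explicitly in the appendix to \cite{ChiantiniMigliore19} and can be invoked directly.

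With both curves in hand, the proof finishes by a Bezout argument. For $P$ general, $C_4$ is irreducible (a cone over a smooth quartic), so it shares no component with the reducible sextic $D_6$, and it meets each of the six component lines transversally in $4$ points. Since $\deg C_4 \cdot \deg D_6 = 24 = |\pi(Z_{24})|$ and $\pi(Z_{24})\subset C_4\cap D_6$, the inclusion is forced to be an equality of schemes, exhibiting $\pi(Z_{24})$ as a complete intersection of type $(4,6)$. The main obstacle is producing $C_4$ without simply quoting \cite{ChiantiniMigliore19}: unlike the cone $F$ over $Z_{60}$, I do not see any symmetry between $\{x,y,z,w\}$ and $\{a,b,c,d\}$ that forces the quartic in closed form, so a self-contained derivation seems to require the explicit computer algebra check.
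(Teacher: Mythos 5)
Your proposal follows essentially the same route as the paper: both obtain the degree-$4$ factor from the unexpected quartic cone (the $\calc(4)$ property) established in the Appendix to \cite{ChiantiniMigliore19}, and the degree-$6$ factor from the images of six pairwise skew lines in $\L_{18}$ covering $Z_{24}$. Your added B\'ezout/transversality check at the end is a correct and slightly more explicit justification of the scheme-theoretic equality that the paper leaves implicit.
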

\proof
   This has been already proved in the Appendix to \cite{ChiantiniMigliore19}. However, now our understanding of the situation is better. The set $Z_{24}$ has the $\calc(4)$ property as proved in the Appendix, so its image under a general projection is contained in an irreducible curve $\Gamma$ of degree $4$. The points on $\Gamma$ can be either cut out by images of $6$ disjoint lines, as discussed above, or by an irreducible curve determined by the cone given in Theorem \ref{thm: unexpected cone of degree 6}.
\endproof
   We conclude by the following, amusing, observation.
\begin{remark}
   The residual set $Z_{60}\setminus Z_{24}$ is a $(6,6)$-grid.
\end{remark}
\proof
   It can be checked directly that the following two sets of lines
   $$\left\{\ell_4,\ell_9,\ell_{14},\ell_{17},\ell_{22},\ell_{27}\right\}\;\;\mbox{ and }\;\;
   \left\{\ell_5, \ell_8, \ell_{13}, \ell_{18}, \ell_{23}, \ell_{26}\right\}$$
   intersect in the $36$ points of $Z_{60}\setminus Z_{24}$.
\endproof

We conclude by answering an interesting question raised by the referee.
\begin{proposition}\label{prop:notR}
   The Klein configuration $Z_{60}$ can not be realized over the reals.
\end{proposition}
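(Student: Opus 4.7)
The plan is to prove the proposition by exhibiting a projective invariant of the combinatorial structure of $Z_{60}$ that must take a non-real value, namely a cross-ratio of four collinear points of $Z_{60}$ on one of the distinguished lines. Since cross-ratios of four real collinear points are real, this will rule out any real projective realisation.

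Fix $\ell=\ell_{1}\in\L_{30}$ together with its six points of $Z_{60}$. A double-count among the $15$ Klein tetrahedra (each contributing six edges) and the $30$ lines shows that exactly three fundamental tetrahedra $T_{1},T_{2},T_{3}$ have an edge on $\ell$; these three edges partition the six points of $Z_{60}\cap\ell$ into three canonical pairs. Equivalently, using the $(30,10)$ line--quadric incidence of Section~\ref{sec: history}, the six Klein fundamental quadrics that do not contain $\ell$ intersect $\ell$ in exactly these three pairs, with two ``twin'' quadrics per pair.

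The heart of the argument is to show that these three pairs are pairwise in harmonic position. I would do this by extracting from each tetrahedron $T_{k}$ the unique projective involution $\tau_{k}\in\mathrm{PGL}_{4}(\C)$ that pointwise fixes a pair of opposite cross-edges of $T_{k}$; on $\ell$ this $\tau_{k}$ restricts to the involution whose pair of fixed points is precisely the $T_{k}$-pair. These three involutions lie inside the automorphism group $\overline{G_{80}}$ of Section~\ref{sec: group}---more specifically inside the Heisenberg subgroup $\overline{H_{2,2}}$, where in explicit coordinates they appear as $S_{2},T_{2},S_{2}T_{2}$---and they generate there a Klein four-group. Since two projective involutions on $\P^{1}$ commute if and only if their pairs of fixed points are in harmonic position, the three pairs on $\ell$ are pairwise harmonic. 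The unique projective type of six points on $\P^{1}$ carrying three pairwise harmonic pairs is $\{0,\infty,1,-1,i,-i\}$, and in any such normalisation the cross-ratio $[P_{27},P_{28};P_{1},P_{2}]$ equals $-i$, which is not real. Hence the six points on $\ell$ cannot all lie on a real projective line, so $Z_{60}$ admits no real realisation.

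The principal obstacle is justifying that the Klein four-group of involutions survives in any hypothetical real realisation rather than being an artefact of our complex coordinates; a priori the projective symmetry group could shrink. The cleanest way around this is to characterise each $\tau_{k}$ intrinsically from the incidence data as the unique projective involution of $\P^{3}$ fixing pointwise two specific cross-edges of $T_{k}$ in $\L_{30}$, so that the involutions are combinatorially determined; pairwise commutativity in $\mathrm{PGL}_{4}$ then follows by tracing the common fixed loci through the $12$ cross-edges of $T_{1}\cup T_{2}\cup T_{3}$. An alternative route, avoiding the symmetry group altogether, is to verify directly that the three binary quadratic forms cut out on $\ell$ by the six non-containing Klein quadrics are pairwise apolar---apolarity being the analytic equivalent of harmonic pairs---using only the pairwise intersection pattern of Klein's quadrics recorded in the incidence table of Section~\ref{sec: history}.
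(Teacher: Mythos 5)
Your overall strategy --- exhibiting a projectively invariant, incidence-forced cross-ratio that must be non-real --- is a legitimate and genuinely different route from the paper's. (The paper instead restricts to the $15$ points in the plane $w=0$, observes that the $12$ points $P_9,\dots,P_{12},P_{17},\dots,P_{24}$ have \emph{no} ordinary line, and invokes the dual Sylvester--Gallai theorem; no metric or cross-ratio data is needed there.) Your explicit computations are correct: on $\ell_1$ the six points are $\{0,\infty,1,-1,i,-i\}$ in the coordinate $w/z$, the three pairs are pairwise harmonic, three pairwise harmonic pairs on $\P^1$ are projectively rigid, and such a sextuple cannot be real.

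The gap is in the step you yourself flag, and neither of your proposed repairs closes it. A real realization, as the paper defines it, is only required to reproduce the \emph{collinearities and coplanarities} of $Z_{60}$; it carries no a priori copy of $\overline{G_{80}}$, of $\overline{H_{2,2}}$, or of the ten fundamental quadrics. Your first repair defines $\tau_k$ as the unique involution of $\P^3$ fixing two cross-edges pointwise --- this involution does exist and is unique for any two skew lines --- but whether $\tau_1$ and $\tau_2$ commute is equivalent to $\tau_1$ preserving the fixed locus $L_2\sqcup L_2'$ of $\tau_2$, and that is a statement about the actual positions of four lines in the realization, not about the incidence data; ``tracing the common fixed loci through the $12$ cross-edges'' is not an argument. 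Your second repair is worse off: the fundamental quadrics are not determined by collinearity/coplanarity data at all, so there is nothing to restrict to $\ell$ in a hypothetical real model, and in any case apolarity of two restricted binary forms is not a consequence of the quadrics' intersection pattern. To make your approach rigorous you would need a von Staudt--type certificate: exhibit, inside $Z_{60}$ itself, complete quadrangles (with vertices in $Z_{60}$ and sides among the configuration's coplanarity/collinearity relations) whose diagonal triangles cut each of the three harmonic relations on $\ell_1$. That may well be possible for this configuration, but as written the harmonicity is only verified in the specific complex coordinates, which begs the question. Until that step is supplied, the proof is incomplete.
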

By the realizability we mean simply finding a set of $60$ points in $\P^3(\R)$ with the same colinearities and coplanarities as those in $Z_{60}$.
\begin{proof}
   The idea is to look at one of the $60$ planes incident to $15$ points in $Z_{60}$. To work with specific points, we choose the plane $w=0$. Then we obtain the configuration indicated in Figure \ref{fig:F}. For the sake of clarity we just indicate the numbers of the points, which are in the accordance with those introduced earlier. This configuration is an augmented Fermat configuration, see \cite[Examples 3.4 and 3.5]{Szp19c}.

\definecolor{uuuuuu}{rgb}{0.26,0.26,0.26}
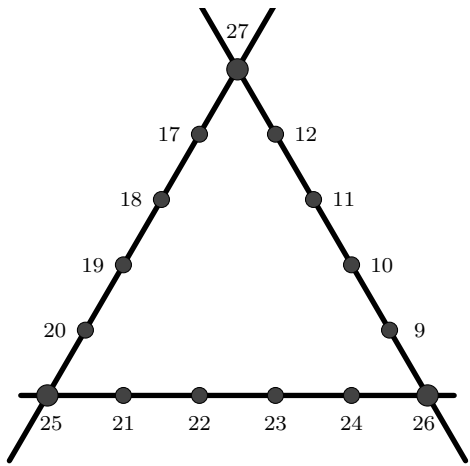
\begin{figure}[h!]%{.48\textwidth}
  \centering
\begin{tikzpicture}[line cap=round,line join=round,>=triangle 45,x=1cm,y=1cm]
\clip(-1,-.5) rectangle (7,6);
\draw [line width=2pt,domain=0:14.18] plot(\x,{(-0--5.196152422706633*\x)/3});
\draw [line width=2pt,domain=-4.94:6] plot(\x,{(--31.1769145362398-5.196152422706633*\x)/3});
\draw [line width=2pt,domain=.15:5.85] plot(\x,0.8660254037844393);
\begin{scriptsize}
\draw [fill=uuuuuu] (0.5,0.8660254037844393) circle (4pt);
\draw [fill=uuuuuu] (1,1.7320508075688785) circle (3pt);
\draw (0.6,1.7320508075688785) node {$20$};
\draw [fill=uuuuuu] (1.5,2.5980762113533182) circle (3pt);
\draw (1.1,2.5980762113533182) node {$19$};
\draw [fill=uuuuuu] (2,3.4641016151377584) circle (3pt);
\draw (1.6,3.4641016151377584) node {$18$};
\draw [fill=uuuuuu] (2.5,4.330127018922198) circle (3pt);
\draw (2.1,4.330127018922198) node {$17$};
\draw [fill=uuuuuu] (3.5,4.330127018922196) circle (3pt);
\draw (3.9,4.330127018922196) node {$12$};
\draw [fill=uuuuuu] (4,3.464101615137759) circle (3pt);
\draw (4.4,3.464101615137759) node {$11$};
\draw [fill=uuuuuu] (4.5,2.5980762113533244) circle (3pt);
\draw (4.9,2.5980762113533244) node {$10$};
\draw [fill=uuuuuu] (5,1.7320508075688847) circle (3pt);
\draw (5.4,1.7320508075688847) node {$9$};
\draw [fill=uuuuuu] (5.5,0.8660254037844328) circle (4pt);
\draw [fill=uuuuuu] (1.5,0.8660254037844328) circle (3pt);
\draw [fill=uuuuuu] (2.5,0.8660254037844328) circle (3pt);
\draw [fill=uuuuuu] (3.5,0.8660254037844328) circle (3pt);
\draw [fill=uuuuuu] (4.5,0.8660254037844328) circle (3pt);
\draw [fill=uuuuuu] (3,5.19) circle (4pt);
\draw (3,5.7) node {$27$};
\draw (0.55,0.5) node {$25$};
\draw (1.5,0.5) node {$21$};
\draw (2.5,0.5) node {$22$};
\draw (3.5,0.5) node {$23$};
\draw (4.5,0.5) node {$24$};
\draw (5.45,0.5) node {$26$};
\end{scriptsize}
\end{tikzpicture}
  \caption{Points from $Z_{60}$ in the $w=0$ plane}
  \label{fig:F}
\end{figure}

   Most of collinearities between points in the set
   $$F=\left\{P_9, P_{10}, P_{11}, P_{12}, P_{17}, P_{18} , P_{19}, P_{20}, P_{21}, P_{22}, P_{23}, P_{24}\right\}$$
   are not visible in Figure \ref{fig:F}. This is so for a good reason, it is impossible to draw them in the real plane and this is our argument for the Proposition.

   Turning to the details, it is easy to check by direct computations that the following triples of points are collinear
$$9,17,23;\;\; 9,18,24;\;\; 9,19,21;\;\; 9,20,22;\;\; 10,17,22;\;\; 10,18,23;\;\; 10,19,24;\;\; 10,20,21;\;\;$$
$$11,17,21;\;\; 11,18,22;\;\;
11,19,23;\;\; 11,20,24;\;\;
12,17,24;\;\; 12,18,21;\;\;
12,19,22;\;\; 12,20,23.$$
   Additionally, the quadruples indicated in Figure \ref{fig:F} are collinear
$$9,10,11,12;\;\; 17,18,19,20;\;\; 21,22,23,24.$$
Hence any line determined by a pair of points in $F$ contains at least one additional point of $F$. This is not possible over the reals due to the dual Sylvester-Gallai Theorem (see \cite[Theorem 1.1]{GreenTao2013}) and we are done.
\end{proof}
%*****************************************************************************

\paragraph*{Acknowledgement.}
   Our work started in the framework of the Research in Pairs by the Mathematisches Forschungsinstitut Oberwolfach in September 2020. We thank the MFO for providing excellent working conditions despite of the COVID-19 crisis.

   A preliminary version of our results was presented during Oberwolfach workshop on \emph{Lefschetz Properties in Algebra, Geometry and Combinatorics}. We thank the participants and especially Juan Migliore for very helpful comments expressed during the workshop.

   Research of P. Pokora was partially supported by National Science Centre, Poland, Sonata Grant 2018/31/D/ST1/00177. Research of T. Szemberg was partially supported by National Science Centre, Poland, Opus Grant 2019/35/B/ST1/00723.
   Research of J. Szpond was partially supported by National Science Centre, Poland, Harmonia Grant 2018/30/M/ST1/00148.

   We thank the referee for helpful remarks and suggestions which helped us to better present some parts of the manuscript.

%*****************************************************************************

%\begin{thebibliography}{99}\footnotesize\itemsep=0cm

%*****************************************************************************

%\bibliographystyle{abbrv}
%\bibliography{master}

\begin{thebibliography}{10}

\bibitem{BMSS}
T.~Bauer, G.~Malara, T.~Szemberg, and J.~Szpond.
\newblock Quartic unexpected curves and surfaces.
\newblock {\em Manuscripta Math.}, 161(3-4):283--292, 2020.

\bibitem{CheltsovShramov19}
I.~{Cheltsov} and C.~{Shramov}.
\newblock {Finite collineation groups and birational rigidity.}
\newblock {\em {Sel. Math., New Ser.}}, 25(5):68, 2019.
\newblock Id/No 71.

\bibitem{ChiantiniMigliore19}
L.~Chiantini and J.~Migliore.
\newblock Sets of points which project to complete intersections, and
  unexpected cones.
\newblock {\em Trans. Amer. Math. Soc.}, 374(4):2581--2607, 2021.
\newblock With an appendix by A. Bernardi, L. Chiantini, G. Dedham, G.
  Favacchio, B. Harbourne, J.Migliore, T. Szemberg and J. Szpond.

\bibitem{CHMN}
D.~Cook~II, B.~Harbourne, J.~Migliore, and U.~Nagel.
\newblock Line arrangements and configurations of points with an unexpected
  geometric property.
\newblock {\em Compos. Math.}, 154(10):2150--2194, 2018.

\bibitem{Singular}
W.~Decker, G.-M. Greuel, G.~Pfister, and H.~Sch\"onemann.
\newblock {\sc Singular} {4-1-1} --- {A} computer algebra system for polynomial
  computations.
\newblock \url{http://www.singular.uni-kl.de}, 2018.

\bibitem{GreenTao2013}
B.~Green and T.~Tao.
\newblock On sets defining few ordinary lines.
\newblock {\em Discrete Comput. Geom.}, 50(2):409--468, 2013.

\bibitem{HMNT}
B.~Harbourne, J.~Migliore, U.~Nagel, and Z.~Teitler.
\newblock Unexpected hypersurfaces and where to find them.
\newblock {\em Michigan Math. J.}, 70(2):301--339, 2021.

\bibitem{Hunt86}
B.~{Hunt}.
\newblock {Coverings and ball quotients with special emphasis on the
  3-dimensional case}.
\newblock {Bonn. Math. Schr. 174, 87 p.}, 1986.

\bibitem{KleinPhD}
F.~{Klein}.
\newblock {Gesammelte mathematische Abhandlungen. Erster Band: Liniengeometrie.
  Grundlegung der Geometrie. Zum Erlanger Programm. Herausgegeben von R.~Fricke
  und A.~Ostrowski. (Von F.~Klein mit erg\"anzenden Zus\"atzen versehen.) Mit
  einem Bildnis Kleins}.
\newblock {Berlin: J. Springer, XII~u. 612~S. \(8^\circ\) (1922) (1921).},
  1921.

\bibitem{PAG}
R.~Lazarsfeld.
\newblock {\em Positivity in algebraic geometry.}, volume 48, 49 of {\em
  Ergebnisse der Mathematik und ihrer Grenzgebiete. 3. Folge. A Series of
  Modern Surveys in Mathematics [Results in Mathematics and Related Areas. 3rd
  Series. A Series of Modern Surveys in Mathematics]}.
\newblock Springer-Verlag, Berlin, 2004.
\newblock Classical setting: line bundles and linear series.

\bibitem{60script}
P.~{Pokora}, T.~{Szemberg}, and J.~{Szpond}.
\newblock {Computations preformed with SINGULAR}.
\newblock \url{www.szpond.up.krakow.pl/mfo2020.txt}.

\bibitem{SheTod54}
G.~C. Shephard and J.~A. Todd.
\newblock Finite unitary reflection groups.
\newblock {\em Canadian J. Math.}, 6:274--304, 1954.

\bibitem{Szp19c}
J.~Szpond.
\newblock Fermat-type arrangements.
\newblock In {\em Combinatorial structures in algebra and geometry}, volume 331
  of {\em Springer Proc. Math. Stat.}, pages 161--182. Springer, Cham, [2020]
  \copyright 2020.

\bibitem{Szp18multi}
J.~Szpond.
\newblock Unexpected hypersurfaces with multiple fat points.
\newblock {\em J. Symbolic Comput.}, 109:510--519, 2022.

\end{thebibliography}
%\end{document}

%*****************************************************************************

%***************************************************************************** % Addresses

\bigskip
\bigskip
\small

\bigskip
\noindent
   Piotr Pokora,\\
   Department of Mathematics, Pedagogical University of Cracow,
   Podchor\c a\.zych 2,
   PL-30-084 Krak\'ow, Poland.

\nopagebreak
\noindent
   \textit{E-mail address:} \texttt{piotrpkr@gmail.com}\\

\bigskip
\noindent
   Tomasz Szemberg,\\
   Department of Mathematics, Pedagogical University of Cracow,
   Podchor\c a\.zych 2,
   PL-30-084 Krak\'ow, Poland.

\nopagebreak
\noindent
   \textit{E-mail address:} \texttt{tomasz.szemberg@gmail.com}\\

\bigskip
\noindent
   Justyna Szpond,\\
   Institute of Mathematics,
   Polish Academy of Sciences,
   \'Sniadeckich 8,
   PL-00-656 Warszawa, Poland

\nopagebreak
\noindent
   \textit{E-mail address:} \texttt{szpond@gmail.com}\\

%*****************************************************************************

\end{document}